\documentclass[]{article}
\usepackage{lipsum}
\usepackage{amsfonts}
\usepackage{graphicx}
\usepackage{epstopdf}
\usepackage{algorithmic}
\usepackage{authblk}
\usepackage{enumitem}
\usepackage[colorlinks]{hyperref}
\hypersetup{citecolor = blue}

\usepackage{comment}
\usepackage{amsopn}
\usepackage{amssymb}
\usepackage{tikz-cd}

\usepackage[margin=1.5in]{geometry}

\usepackage{amsfonts,amsbsy,amscd,amsgen,dsfont,amsmath,amssymb,mathrsfs,amsthm}

\newcommand{\id}{\mathrm d}

\newcommand{\vc}{\mathbf}

\renewcommand{\tilde}{\widetilde}

\newcommand{\g}{\mathrm{g}}

\newcommand{\mdiff}[1]{\frac{\mathrm{D} #1}{\mathrm{D}t}}

\newcommand{\diff}[2]{\frac{\mathrm{d} #1}{\mathrm{d}#2}}

\newcommand{\eps}{\epsilon}
\newcommand{\N}{\mathbb{N}}
\newcommand{\dr}{R_\rho}
\newcommand{\vcw}{\tilde{\vc{w}}}
\newcommand{\epst}{{\tilde{\eps}}}

\DeclareMathAlphabet\mathbfcal{OMS}{cmsy}{b}{n}

\newcommand{\R}{\mathbb{R}}

\newcommand{\grad}{\nabla}

\theoremstyle{definition}
\newtheorem{theorem}{Theorem}
\newtheorem{remark}{Remark}
\newtheorem{definition}{Definition}

\begin{document}
	\title{Slow Manifold Reduction for Inertial Particles with Quadratic Drag}
	\author[]{Mathew Angel} 
	\author[]{Mohammad Farazmand\thanks{Corresponding author: farazmand@ncsu.edu}}
	\affil[]{Department of Mathematics, North Carolina State University, 2311 Stinson Drive, Raleigh, NC 27695, USA} 
	\renewcommand\Affilfont{\itshape\small}
	\date{}
	
	\maketitle	
	\maketitle
	
\begin{abstract}
We consider the dynamics of inertial particles in unsteady fluid flows.	At low Reynolds numbers, where the drag force is linear in the relative velocity, it is well-known that the dynamics admit an attracting, invariant, slow manifold which emerges as the perturbation of a normally hyperbolic critical manifold. However, at high Reynolds numbers, where the drag force is quadratic in the relative velocity, the critical manifold is no longer normally hyperbolic, and therefore its persistence has remained an open problem. Here, we resolve this issue by a particular application of the blowup method, which transforms the equations of motion to a generalized weighted cylindrical coordinate system, thereby desingularizing the dynamics on the critical manifold. We subsequently prove that the critical manifold persists under sufficiently small perturbations and derive the reduced equations of motion on the perturbed slow manifold to arbitrary accuracy. Our reduced equation differs from its linear-drag counterpart in its asymptotic expansion as well as its convergence rate. Using two examples, we demonstrate the validity of our slow manifold reduction. We also showcase an application of the reduced equations to the problem of source inversion in a turbulent dispersion model.
\end{abstract}

\section{Introduction}
Inertial particles refer to small rigid particles suspended in a fluid flow. Understanding their dynamics is crucial in many applications such as air pollution~\cite{Flagan1988}, sedimentation in water~\cite{Yue2009}, airborne disease transmission~\cite{Bourouiba2021}, and ocean drifters~\cite{Wichmann2019}, to name a few. 

Because of their inertia, the trajectories of these particles differ from the fluid trajectories. 
Consider a fluid with density $\rho_f$ filling the spatial domain $\Omega_x\subseteq\R^3$ and moving according to a known velocity field $\vc u:\R\times\Omega_x\rightarrow\R^3, (t,\vc x)\mapsto \vc u(t,\vc x)$. 
A small, rigid, spherical particle with radius $a$ and mass density $\rho_p$ is suspended in this fluid. We denote the position of the particle at time $t$ by $\vc x(t)\in\Omega_x$ and its velocity by $\vc v(t)=\dot{\vc x}(t)\in\mathbb R^3$. If the particle radius is small compared with the characteristic length scale of the flow, 
its motion is modeled by the force balance \cite{Kim1998,maxey1983equation},
\begin{equation}
\begin{aligned}
\rho_p\dot{\vc v}(t) = &\rho_f\mdiff{\vc u}(t,\vc x)\hspace{1cm}&&\text{(pressure gradient)}\\
&-(\rho_p-\rho_f)\g\vc{e}_\g&&\text{(buoyancy)}\\
&-\mathcal D[\vc v(t)-\vc u(t,\vc x)]&&\text{(drag)}\\
&-\frac{\rho_f}{2}\left[\dot{\vc v}(t)-\mdiff{\vc u}(t,\vc x)\right]&&\text{(added mass),}
\end{aligned}
\label{e1}
\end{equation}
where the material derivative $\mdiff{\ } :=\partial_t+\vc u\cdot \grad$ denotes the time derivative along the fluid trajectory. We denote the constant gravitational acceleration by $\g>0$ and choose the unit vector $\vc e_\g$ to point in the opposite direction of gravity. Each term on the right hand side of \eqref{e1} represents a distinct force acting on the particle, and the parenthetical labels identify the corresponding physical mechanism. Following common practice~\cite{IP_babiano,daitche2011memory}, model~\eqref{e1} neglects Fax\'en corrections and the Basset--Boussinesq history force; Fax\'en corrections can be incorporated in our slow manifold reduction in a straightforward fashion, whereas the history force introduces memory effects that cannot be treated in the same manner.

The drag law $\mathcal D[\vc v-\vc u]$ depends on the balance between viscous and inertial forces. For a particle moving with the characteristic speed $W$ relative to a fluid with kinematic viscosity $\nu$, the particle Reynolds number, $\mathrm{Re}_p = 2aW/\nu$, measures this balance. When $\mathrm{Re}_p$ is very small, viscous forces dominate, resulting in a linear drag which is proportional to the relative velocity of the particle. When $\mathrm{Re}_p$ is large, inertial forces dominate, and the drag force is quadratic in the relative velocity and proportional to a constant drag coefficient $C_d$. More specifically, the standard drag curve for a spherical particle \cite[Fig.~1.19]{schlichting1979boundary} dictates the following low and high particle Reynolds number regimes:
\begin{equation}
\mathcal D[\vc v-\vc u]=
\left\{
\begin{array}{lll}
\displaystyle \frac{9\nu\rho_f}{2a^2}(\vc v-\vc u),& \mathrm{Re}_p\lesssim1& \text{(linear drag)},\\
[1ex]\displaystyle 
\frac{3C_d\rho_f}{8a}\|\vc v-\vc u\|(\vc v-\vc u),& 10^3\lesssim\mathrm{Re}_p\lesssim2\times 10^5& \text{(quadratic drag)}.
\end{array}
\right.
\label{drag_law}
\end{equation}

For linear drag, geometric singular perturbation theory (GSPT) has been used to show that the particle dynamics admit an invariant slow manifold which attracts trajectories exponentially fast in time \cite{IP_haller08,mograbi2006,rubin1995_IP}. On the slow manifold, the particle velocity $\vc v(t)$ is dictated by its position and therefore can be written as a function of position $\vc x$ and time $t$. To leading order, the particle velocity field on the slow manifold is given by
\begin{equation}
\vc v(t) \simeq \vc u(t,\vc x)-\frac{2a^2}{9\nu}\cdot \frac{\rho_p-\rho_f}{\rho_f}\left(\mdiff{\vc u}+\g\vc e_\g\right).
\label{lin_slowman}
\end{equation}

For quadratic drag, however, GSPT does not apply as we discuss further in Section~\ref{sec:prelims}. Here, we use a geometric technique, known as the \emph{blowup construction}, to transform the problem into a form amenable to GSPT. Subsequently, we prove for the first time that system \eqref{e1} admits an attracting slow manifold under quadratic drag. To leading order, our slow manifold reduction is given by
\begin{equation}
\vc v(t) \simeq \vc u(t,\vc x)-\left(\frac{8a}{3C_d\rho_f|\rho_p-\rho_f|}\right)^\frac{1}{2}(\rho_p-\rho_f)\frac{\mdiff{\vc u}+\g\vc e_\g}{\|\mdiff{\vc u}+\g\vc e_\g\|^{1/2}}.
\end{equation}
We also derive the full slow manifold expansion, recursively generated higher order approximations.

In many applications, the particle Reynolds number is high enough that linear drag fails to capture the correct particle dynamics. Quadratic drag is therefore used in a variety of applications, including firebrand transport in wildfires \cite{Anthenien2006FirebrandTrajectories,Koo2012FirebrandTransport,Mendez2022}, sedimentation in turbulent flows \cite{Kok2009COMSALT,Zhao2022BedloadSaltation}, volcanic ash dispersal \cite{Beckett2015AshForecastSensitivity,Dioguardi2018IrregularParticleDrag}, and hail and snow settling \cite{Kumjian2020HailTrajectory,Tagliavini2021SnowDrag}. Hence, our results are relevant in a wide variety of applications. Compared to the full-order model~\eqref{e1}, our slow manifold reduction has several benefits: (i) It is lower dimensional and therefore computationally more efficient. (ii) It allows for source inversion through backward integration of the reduced dynamics which is impossible using the full-order model; see Section~\ref{sec:source_inv}.
(iii) It facilitates the identification of inertial particle clustering patterns, as it has already been done in the case of linear drag~\cite{IP_haller08,sapsis2010clustering}.

\subsection{Prior work}
In this section, we review two lines of prior work. The first concerns the development of the inertial particle model underlying equation \eqref{e1}. The second concerns the use of GSPT for the slow manifold reduction of inertial particle dynamics.

\subsubsection{Modeling aspects and drag laws}
The force balance in \eqref{e1}, with linear drag \eqref{drag_law}, arises from a series of models for the motion of small rigid spheres in viscous fluids. Stokes \cite{stokes1851} studied the viscous resistance on a sphere in creeping flow. Basset \cite{basset2}, Boussinesq \cite{boussinesq}, and Oseen \cite{oseen1927} developed force-balance models for the unsteady motion of a sphere in a viscous fluid, leading to the so-called BBO equation. Tchen \cite{Tchen} extended this framework to particles suspended in a turbulent flow by formulating the particle motion relative to the surrounding fluid velocity.

Later work refined the individual force terms. For example, Corrsin and Lumley \cite{corrsin1956} clarified the role of pressure gradients in \eqref{e1}. Maxey and Riley \cite{maxey1983equation} derived an equation for a small rigid sphere in a nonuniform, unsteady flow. Their formulation includes the pressure gradient, buoyancy, linear drag, added mass, Fax\'en corrections, and the Basset--Boussinesq history force. Auton et al. \cite{auton1988} later corrected the form of the added mass term, based on the eariler work by Taylor~\cite{Taylor1928}. The widely accepted form of the model is given in equation \eqref{e1} which we refer to as the Maxey--Riley equation, when linear drag is used.
The well-posedness of this model, including the Fax\'en corrections and the history force, was proved relatively recently~\cite{MR_EUR,langlois2015}.

The linear drag is only applicable in the so-called creeping flow, where the particle Reynolds number is very small. For larger particle Reynolds numbers, the drag law is often written using a drag coefficient $C_d$ that depends on the particle Reynolds number \cite{abraham1970}. This drag coefficient recovers linear drag in the low Reynolds number limit \cite{michaelides1997}. As the particle Reynolds number increases, the drag coefficient is well approximated by a constant, resulting in a drag force that is quadratic in the particle relative velocity. We refer to \eqref{e1}, with quadratic drag, as the \emph{quadratic Maxey--Riley equation}.

\subsubsection{Slow manifold reduction}
In the small inertia regime, inertial particle dynamics involve a fast adjustment of particle velocity and a slower evolution of particle position \cite{IP_haller08,mograbi2006}. After the fast transients, the particle velocity is given by a graph over position and time that involves the local fluid velocity and its derivatives. This graph defines a slow manifold, and the dynamics on it give a lower dimensional evolution equation for the inertial particle.

An early slow manifold reduction, in the linear drag regime, appears in Rubin, Jones and Maxey \cite{rubin1995_IP}. They study aerosol particles in a steady, spatially periodic cellular flow with gravity. Treating particle inertia as small, they prove the existence of a globally attracting slow manifold and use the reduced dynamics to describe the asymptotic settling velocity of the particles.

Mograbi and Bar-Ziv \cite{mograbi2006} consider the Maxey--Riley equation without the history term in the small inertia regime. They generalize the slow manifold reduction to steady flows with arbitrary spatial dependence. In particular, they identify a globally attracting invariant manifold and develop a recursive asymptotic scheme for computing it to an arbitrary order of accuracy.
Haller and Sapsis \cite{IP_haller08} generalized this reduction to unsteady flows, and used it to infer the regions where the particles accumulate~\cite{IP_haller08,sapsis2010clustering}.

This reduced inertial equation has since been used in numerous environmental and industrial transport problems~\cite{beron2015,Coletti2022,Vanneste2020,Wichmann2019}. For example, Sapsis and Haller \cite{sapsis2009inertial} apply it to inertial particle motion in a time-dependent hurricane model and use the reduced dynamics to identify inertial Lagrangian coherent structures. Tang et al. \cite{tang2009} use the same framework for atmospheric source inversion by projecting particle observations onto the slow manifold and integrating the reduced dynamics backward in time.

More recently, Aksamit et al. \cite{aksamit2026} derived a reduced-order model for snow particle transport from a Maxey--Riley-type equation with nonlinear drag. Their model uses the Reynolds number dependent Abraham drag law \cite{abraham1970}, which connects the linear drag regime at small Reynolds numbers to an approximately constant drag coefficient regime at large Reynolds numbers. However, their framework assumes that linear drag is dominant and therefore does not directly address the fully quadratic drag regime. As such, unlike the present work, their slow manifold reduction follows from a straightforward application of GSPT and does not require a blowup construction.

The slow manifold reductions described above either use linear drag directly or retain a Reynolds-dependent drag law whose dominant behavior is determined by linear drag \cite{abraham1970,aksamit2026,IP_haller08,mograbi2006}. The present work addresses  the high Reynolds number regime, where the drag coefficient $C_d$ is constant, and the drag law is genuinely quadratic. We show that, in this regime, GSPT is not immediately applicable because of loss of hyperbolicity. We further show that a nonlinear change of coordinates, known as the blowup construction, restores hyperbolicity and leads to a slow manifold reduction distinct from that of the linear drag setting.

\subsection{Outline}

The remainder of this paper is organized as follows. In Section~\ref{sec:prelims}, we formulate the model as a singular perturbation problem and show that, in the quadratic case, the slow manifold is non-hyperbolic. Section~\ref{sec:main_result} develops the blowup construction used to treat the quadratic drag system and derives the associated slow manifold reduction. Section~\ref{sec:numerics} presents our numerical and analytic examples that demonstrate the validity of the reduced model and apply it to a source inversion problem. Section~\ref{sec:conclusion} contains our concluding remarks.

\section{Preliminaries}
\label{sec:prelims}
This section sets up the singular perturbation formulation used throughout the paper. We first nondimensionalize the particle equation, write it equivalently in terms of the relative velocity, and formulate the resulting dynamics in the slow-fast setting. Applying GSPT to the linear and quadratic drag cases in parallel shows why the argument works for linear drag but fails for quadratic drag. This discussion motivates the blowup construction, as discussed in Section~\ref{sec:main_result}.

Given a characteristic velocity $U$, we define the characteristic length scale $L = U^2/\g$ and characteristic time scale $T_c=U/\g$. We then introduce dimensionless variables $\vc x\mapsto \vc x/L$, $\vc v\mapsto \vc v/U$, $\vc u\mapsto \vc u/U$, and $t\mapsto t/T_c$. We also introduce the relative velocity $\vc w(t) :=\vc v(t)-\vc u(\vc x(t),t)$ and write the system in terms of the dimensionless state variables $\vc x$ and  $\vc w$. The equations can be equivalently written in terms of the particle velocity $\vc v(t)$; however, the relative velocity formulation slightly simplifies the forthcoming analysis. We write the drag term as $\mu^{(\alpha)}\|\vc w\|^{\alpha-1}\vc w$, where $\alpha=1$ corresponds to linear drag and $\alpha=2$ corresponds to quadratic drag. These rearrangements of \eqref{e1} yield the equivalent formulation,
\begin{equation}
\begin{aligned}
\dot{\vc x} &= \vc w+\vc u,\\
\dot{\vc w} &=-\mu^{(\alpha)}\|\vc w\|^{\alpha-1}\vc w-\dr\left(\mdiff{\vc u}+\vc e_\g\right)-(\vc w\cdot\grad)\vc u,
\end{aligned}
\label{e2}
\end{equation}
with the dimensionless drag coefficients,
\begin{equation}
\mathcal \mu^{(\alpha)}=
\left\{\begin{array}{lll}\displaystyle 
\frac{3\nu U}{a^2 \g}(1-\dr)=\frac{3}{\mathrm{Re}_f}\left(\frac{L}{a}\right)^2(1-\dr),& \alpha=1& \text{(linear drag)},\\
[1ex]\displaystyle 
\frac{C_d}{4}\frac{U^2}{a\g}(1-\dr)=\frac{C_d}{4}\frac{L}{a}(1-\dr),& \alpha=2& \text{(quadratic drag)},
\end{array}
\right.
\end{equation}
where $\mathrm{Re}_f = UL/\nu$ is the fluid Reynolds number.

The density ratio $\dr = \frac{2(\rho_p-\rho_f)}{2\rho_p+\rho_f}$ satisfies $-2<\dr<1$ and measures the density contrast between the particle and the fluid. Neutrally buoyant particles have $\dr=0$. Particles lighter than the fluid are referred to as \emph{bubbles} and correspond to $\dr<0$, whereas particles heavier than the fluid are referred to as \emph{aerosols} and satisfy $\dr>0$. Since the particle radius $a$ is small compared to the fluid characteristic length scale $L$, the drag coefficients are large, $\mu^{(\alpha)}\gg 1$.
From this point on, for notational simplicity, we write $\mu$ instead of $\mu^{(\alpha)}$ whenever the value of $\alpha$ is clear from the context.
\begin{remark}
A few remarks are in order here. 
\begin{enumerate}
	\item In the linear drag regime, the inverse of the Stokes number, $\mathrm{St}:=\mathrm{Re}_f(a/L)^2$, appears as a prefactor in the drag coefficient. 
	\item In the literature, the nondimensional equations are often written in terms of the particle velocity $\vc v(t)$, instead of its relative velocity $\vc w(t)$. In that formulation, the density ratio is defined as $R = 2\rho_f/(\rho_f+2\rho_p)$ where neutrally buoyant particles correspond to $R=2/3$~\cite{IP_haller08}. Although the two formulations are equivalent, ours is more natural with $\dr=0$ for neurally buoyant particles.
\end{enumerate}
\end{remark}

\subsection{Slow-fast formulation}\label{sec:sf_form}
For $\mu\gg1$, we define the small parameter $\eps=1/\mu$ and rewrite equation \eqref{e2} as the singular perturbation problem,
\begin{equation}
\begin{aligned}
\dot{\vc x} &= \vc u+\vc w,\\
\eps\dot{\vc w} &=- \|\vc w\|^{\alpha-1}\vc w-\eps\dr\left(\mdiff{\vc u}+\vc e_\g\right)-\eps(\vc w\cdot\grad)\vc u.
\end{aligned}
\label{e4}
\end{equation}

We introduce the fast time $\tau=(t-t_0)/\eps$, so that differentiation with respect to $\tau$ satisfies 
$\id/\id \tau =\epsilon\, \id/\id t$, where $t_0>0$ is the initial time. We denote differentiation with respect to $\tau$ by a prime. As in the finite-time autonomous formulation used for inertial particle dynamics in unsteady flows \cite{IP_haller08}, we append $t$ as a state variable to obtain
\begin{subequations}
\begin{align}
t'&=\eps,\\
{\vc x}' &= \eps(\vc u+\vc w),\\
{\vc w}' &=-\|\vc w\|^{\alpha-1}\vc w-\eps\dr\left(\mdiff{\vc u}+\vc e_\g\right)-\eps(\vc w\cdot\grad)\vc u\label{e5b}.
\end{align}
\label{e5}
\end{subequations}

In \eqref{e5}, the relative velocity $\vc w$ evolves at an $\mathcal{O}(1)$ rate with respect to the fast time $\tau$. We therefore refer to $\vc w$ as the fast variable and to \eqref{e5b} as the fast equation. By contrast, the variables $t$ and $\vc x$ evolve at an $\mathcal{O}(\eps)$ rate with respect to the fast time. Therefore, we refer to them as the slow variables. 

\subsection{Critical manifold and normal hyperbolicity}

Setting $\eps=0$ in \eqref{e5} results in the so-called layer equations,
\begin{subequations}
	\label{e6}
\begin{align}
t'&=0,\\
{\vc x}' &=\vc 0,\\
{\vc w}' &=-\|\vc w\|^{\alpha-1}\vc w\label{e6b}.
\end{align}
\end{subequations}
The equilibria of \eqref{e6} occur when $\|\vc w\|^{\alpha-1}\vc w=0$. Thus, for any arbitrary $(t,\vc x)$, the points $(t,\vc x,\vc 0)$ are fixed points of the layer problem. To obtain a compact family of such fixed points, following~\cite{IP_haller08}, we assume that $\Omega_x$ is compact and define the finite time interval $\Omega_t = [t_0,t_0+T]$ for any finite $T>0$. Then the spatiotemporal domain $\Omega = \Omega_t\times\Omega_x$ is compact, and 
\begin{equation}\label{eq:crit_man}
\mathcal{M}_0 = \{(t,\vc x,\vc w):\quad\vc w=\vc 0,\quad (t,\vc x)\in \Omega\}
\end{equation}
is a compact invariant manifold consisting entirely of fixed points of \eqref{e6}. The set $\mathcal M_0$ is referred to as the \emph{critical manifold}. Figure~\ref{fig:slow_manifold_geometry}(a) illustrates this critical manifold as the $\vc w=\vc 0$ sheet in the extended phase space $(t,\vc x,\vc w)$. Fenichel's geometric singular perturbation theory \cite{fenichel1979} determines the conditions under which the critical manifold persists for $0<\eps\ll1$. 

\begin{theorem}[GSPT~\cite{fenichel1979} applied to \eqref{e5}] 
For $\epsilon=0$, suppose that $\mathcal{M}_0$ is an invariant, compact, normally hyperbolic, and attracting manifold of system \eqref{e5}. Then there exists $\eps_0>0$ such that for all $\eps\in[0,\eps_0)$,
\begin{enumerate}
\item There exists a locally invariant manifold $\mathcal{M}_{\eps}$ diffeomorphic to $\mathcal{M}_0$. 
\item $\mathcal{M}_{\eps}$ is as smooth as the right-hand side of \eqref{e5} and is $\mathcal{O}( \eps)$ close to $\mathcal{M}_0$ in Hausdorff distance.
\item $\mathcal{M}_{\eps}$ is normally hyperbolic and attracting. 
\end{enumerate}
\label{thm1}
\end{theorem}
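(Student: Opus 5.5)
This statement is Fenichel's persistence theorem specialized to system \eqref{e5}, so the plan is to check that \eqref{e5} meets Fenichel's hypotheses and then invoke \cite{fenichel1979}, rather than to reprove invariant manifold theory.

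\textbf{Step 1: regularity and the $\eps$-family.} Write \eqref{e5} as $z' = F(z,\eps)$ with $z=(t,\vc x,\vc w)$. Assume $\vc u$ is $C^{r+1}$ on a neighborhood of $\Omega$, which makes $\mdiff{\vc u}$ and $\grad\vc u$ of class $C^r$. I will note explicitly that $\|\vc w\|^{\alpha-1}\vc w$ is smooth for $\alpha=1$. For $\alpha=2$ it is only $C^1$ at $\vc w=\vc 0$. That is the place where the smoothness conclusion is limited, and it is consistent with the theorem being stated conditionally.

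\textbf{Step 2: normal hyperbolicity.} The linearization of the layer problem \eqref{e6} at a point of $\mathcal M_0$ has $\dim\Omega$ zero eigenvalues tangent to $\mathcal M_0$. The normal block is $D_{\vc w}\bigl(-\|\vc w\|^{\alpha-1}\vc w\bigr)\big|_{\vc w=\vc 0}$. Attracting normal hyperbolicity means this block has spectrum in $\{\mathrm{Re}<0\}$, uniformly on the compact set $\Omega$. By hypothesis this spectral gap exists. Compactness then yields uniform rates: normal contraction at rate $e^{-\lambda\tau}$ with $\lambda>0$, and tangential growth at most $e^{\delta\tau}$ with $\delta<\lambda$ arbitrarily small, because the tangential dynamics is trivial at $\eps=0$.

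\textbf{Step 3: handling the boundary.} Since $\mathcal M_0$ has boundary (it is $\Omega$ times a point), the vector field is modified near $\partial\Omega$ with a bump function so that $\mathcal M_0$ becomes overflowing or inflowing invariant, or one works on a slightly larger compact set. Fenichel's theorem then gives a manifold that is invariant for the modified field. This is why the conclusion says \emph{locally} invariant.

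\textbf{Step 4: apply Fenichel.} Apply Fenichel's theorem to the suspended system $(z,\eps)'=(F(z,\eps),0)$ near $\mathcal M_0\times\{0\}$. This gives a center manifold that is a $C^r$ graph $\vc w = h(t,\vc x,\eps)$ with $h(\cdot,0)=\vc 0$. Taking $\eps$-slices produces $\mathcal M_\eps$, which is diffeomorphic to $\mathcal M_0$ through $(t,\vc x)\mapsto(t,\vc x,h)$. Smoothness of $h$ in $\eps$ gives $h=\mathcal O(\eps)$, hence $\mathcal O(\eps)$ Hausdorff closeness. Normal hyperbolicity and attraction persist because the spectral gap is open under $C^1$-small perturbations.

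The main obstacle is Step 2, together with the regularity issue from Step 1. For $\alpha=1$ the normal block is $-I$ and everything is immediate. For $\alpha=2$ the normal block is the zero matrix, so the hypothesis fails. The correct reading is therefore that the theorem applies to linear drag and motivates the blowup for quadratic drag.
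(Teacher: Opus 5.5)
Your proposal is correct and takes the same approach as the paper: the paper gives no independent proof of this statement and simply invokes Fenichel's theorem \cite{fenichel1979}, as you do once the hypotheses are checked. Your observation that the normal block is $-\vc I$ for $\alpha=1$ but the zero matrix for $\alpha=2$, so that the theorem says nothing for quadratic drag, matches the paper's discussion after the statement; that failure is what leads the paper to re-apply the same Fenichel argument to the desingularized system \eqref{e10}.
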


Recall that local invariance means that trajectories can only enter or leave $\mathcal{M}_{\eps}$ through its boundaries. The manifold $\mathcal M_\eps$ is called a slow manifold because trajectories on it evolve on the slow time scale. Figure~\ref{fig:slow_manifold_geometry}(b) depicts the locally invariant slow manifold $\mathcal M_\eps$, which persists for $0<\eps\ll1$ and lies $\mathcal O(\eps)$ close to $\mathcal M_0$.

\begin{figure}[!t]
\centering
\includegraphics[width=\textwidth]{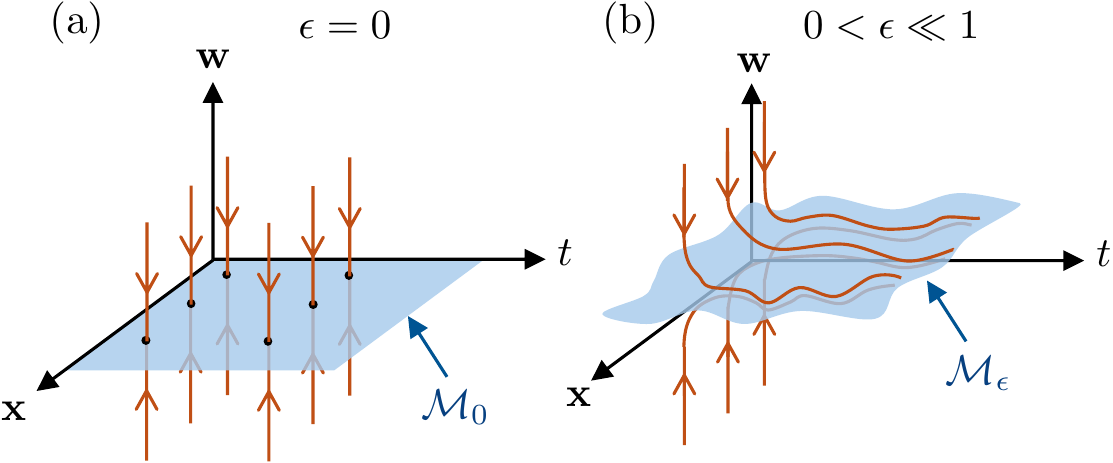}
\caption{Geometric depiction of GSPT for the slow-fast system
\eqref{e5}. (a) The critical manifold $\mathcal{M}_0$ comprising the set of equilibria of
the layer problem at $\eps=0$. (b) The nearby locally invariant slow manifold
$\mathcal{M}_\eps$ that exists for $0<\eps\ll1$ when $\mathcal{M}_0$ is normally hyperbolic. In the attracting case considered here, $\mathcal{M}_\eps$
retains the attracting property of $\mathcal{M}_0$. Trajectories approach
$\mathcal{M}_\eps$ rapidly in the fast direction and then evolve along it on the
slow time scale.}
\label{fig:slow_manifold_geometry}
\end{figure}

Next, we examine the conditions of Theorem~\ref{thm1}. The layer problem~\eqref{e6} admits the solution,
\begin{equation}
\vc w(\tau) = 
\begin{cases}
\vc w(0)e^{-\tau},& \alpha=1 \quad \text{(linear drag)},\\
\frac{\vc w(0)}{1+\|\vc w(0)\|\tau},& \alpha=2\quad  \text{(quadratic drag)}.
\end{cases}
\label{e7}
\end{equation}

In the linear drag case ($\alpha=1$), solutions decay exponentially fast towards the critical manifold $\mathcal{M}_0$. Thus, the critical manifold is normally hyperbolic and attracting, so that the assumptions of Theorem~\ref{thm1} hold. In the quadratic drag case ($\alpha=2$), 
the solutions still converge towards the invariant manifold $\mathcal M_0$ as $\tau\to\infty$, but the rate of convergence is only algebraic. The critical manifold is therefore not normally hyperbolic, rendering Theorem~\ref{thm1} inconclusive. As such, it is not immediately clear whether the invariant manifold persists for $\epsilon>0$.

In Section~\ref{sec:main_result}, we use the blowup method to recover normal hyperbolicity for the quadratic Maxey--Riley equation. This geometric desingularization technique has its roots in algebraic geometry (see, e.g., Chapter 1.4 of \cite {Hartshorne1977}). It has subsequently been used to resolve singularities of vector fields \cite{dumortier1977,takens1974} and to study slow-fast systems near non-hyperbolic sets \cite{dumortier1996,jardon2019,krupa2001,kuehn2015}. 
Its application in the context of inertial particle dynamics is carried out here for the first time.

\section{Slow manifold construction for quadratic drag}
\label{sec:main_result}
Henceforth, we focus on the quadratic drag case ($\alpha=2$). In Section~\ref{sec:prelims}, we showed that the critical manifold $\mathcal M_0$ is not normally hyperbolic in the quadratic drag formulation. We made this assessment by examining the exact solution of the layer problem~\eqref{e6}. Alternatively, one can arrive at the same conclusion by linearizing the dynamics $\vc w'=-\|\vc w\|\vc w$ normal to the critical manifold $\mathcal M_0$. This linearization yields 
\begin{equation}
	\mathrm D_{\vc w}\left[-\|\vc w\|\vc w\right]_{\vc w=\vc 0}=\vc 0_{3\times 3},
	\label{eq:quad_degenerate_linearization}
\end{equation}
where $\mathrm D_{\vc w}$ denotes the Jacobian with respect to $\vc w$. Thus, $\mathcal M_0$ is a manifold of non-hyperbolic equilibria with a vanishing linearization in the normal direction.

Here, we use the blowup method to desingularize the vector field near the critical manifold through a very specific nonlinear change of coordinates. We divide our approach into three parts. First, we introduce the general form of the blowup transformation applicable to inertial particle dynamics with quadratic drag (Section~\ref{sec:3.1}). Then, we examine the physically relevant part of the blowup coordinates by introducing a rescaling chart (Section~\ref{sec:rescaling}). Next, we desingularize the transformed vector field in the rescaling chart (Section~\ref{sec:3.3}). Finally, we state and prove our main result in Theorem~\ref{thm2} (Section~\ref{sec:main_thm}).

\subsection{Blowup transformation}
\label{sec:3.1}

To set up the blowup construction, we first append the small parameter $\eps$ as a state variable in system \eqref{e5} with $\alpha=2$. This results in the system,
\begin{subequations}
\begin{align}
t'&=\eps,\\
{\vc x}' &= \eps(\vc u+\vc w),\\
{\vc w}' &=- \|\vc w\|\vc w-\eps\dr\left(\mdiff{\vc u}+\vc e_\g\right)-\eps(\vc w\cdot\grad)\vc u,\label{e9b}\\
\eps'&=0.
\end{align}
\label{e9}
\end{subequations}

Since the loss of normal hyperbolicity occurs in the fast variable $\vc w\to\vc 0$ as $\eps$ tends to zero, we blow up only $(\vc w,\eps)$, leaving the variables $(t,\vc x)$ unchanged. This motivates the following cylindrical blowup transformation.

\begin{definition}[Weighted cylindrical blowup]
Let $\vc F:\Omega_t\times\Omega_x\times\R^3\times\R\rightarrow\R^8$ be the vector field corresponding to the right-hand side of \eqref{e9}, and let 
\begin{equation}
	\mathbb{S}^3=\left\{(\bar{\mathbf w},\bar\epsilon)\in \R^3\times\R:\|\bar{\mathbf w}\|^2+\bar\epsilon^2=1\right\}
\end{equation}
denote the unit $3$-sphere in $\R^4$. Consider the generalized cylindrical coordinate transformation,
\begin{equation}\label{e09}
\begin{aligned}
\bar{\phi}:\ & \Omega_t\times\Omega_x\times\mathbb{S}^3\times \mathbb R_0^+\rightarrow\Omega_t\times\Omega_x\times\R^3\times\R\\
&(t,\vc x,\bar{\vc w},\bar\eps,r)\mapsto (t,\vc x,\vc w,\eps):=(t,\vc x,r^a\bar{\vc w},r^b\bar \eps),
\end{aligned}
\end{equation}
with $a,b\in\N$. For $r>0$, the weighted cylindrical blowup, $\bar{\vc F}$ of the vector field $\vc F$, is given by
\begin{equation}
	\bar {\vc F} (t,\vc x,\bar{\mathbf w},\bar\epsilon,r)= [\mathrm D\bar{\phi}(t,\vc x,\bar{\mathbf w},\bar\epsilon,r)]^{-1}\vc F(\bar{\phi}(t,\vc x,\bar{\mathbf w},\bar\epsilon,r)),
	\label{blowupF}
\end{equation}
where $\mathrm D$ denote the Jacobian of the map.
\end{definition}

The term \emph{weighted} indicates that the radial exponents $(a,b)$ assigned to $\bar{\vc w}$ and $\bar\eps$ need not be equal \cite{kuehn2015}. The term cylindrical refers to the fact that $t$ and $\vc x$ pass through as Cartesian coordinates. Note that, for $r>0$, the map $\bar\phi$ is a diffeomorphism and $\bar{\vc F}$ is the pullback of the original vector field $\vc F$ under the map $\bar \phi$. In other words, the vector fields $\bar{\vc F}$ and $\vc F$ are smoothly conjugate for $r>0$.

Although the vector field $\bar{\vc F}$ extends continuously to $r=0$~\cite{takens}, this case merits special attention. The pre-image of the degenerate set $\mathcal{M}_0\times\{\eps=0\}$ under $\bar\phi$ is the cylinder,
\begin{equation}
\mathcal{C}_0=
\Omega_t\times \Omega_x\times\mathbb{S}^3\times \{r=0\}.
\end{equation}
Thus, each point in the degenerate set is replaced by a sphere of directions, and the union of these spheres forms the cylinder $\mathcal{C}_0$. The map $\bar\phi$ is the corresponding blow-down map, which collapses this cylinder back to the degenerate set in the original variables \cite{jardon2019,kuehn2015}. The upper left diagram in Figure~\ref{fig_blowup} depicts the blown up cylinder $\mathcal C_0$, while the upper right diagram shows its image under $\bar{\phi}$.

\begin{figure}[!t]
\centering
\includegraphics[width=\textwidth]{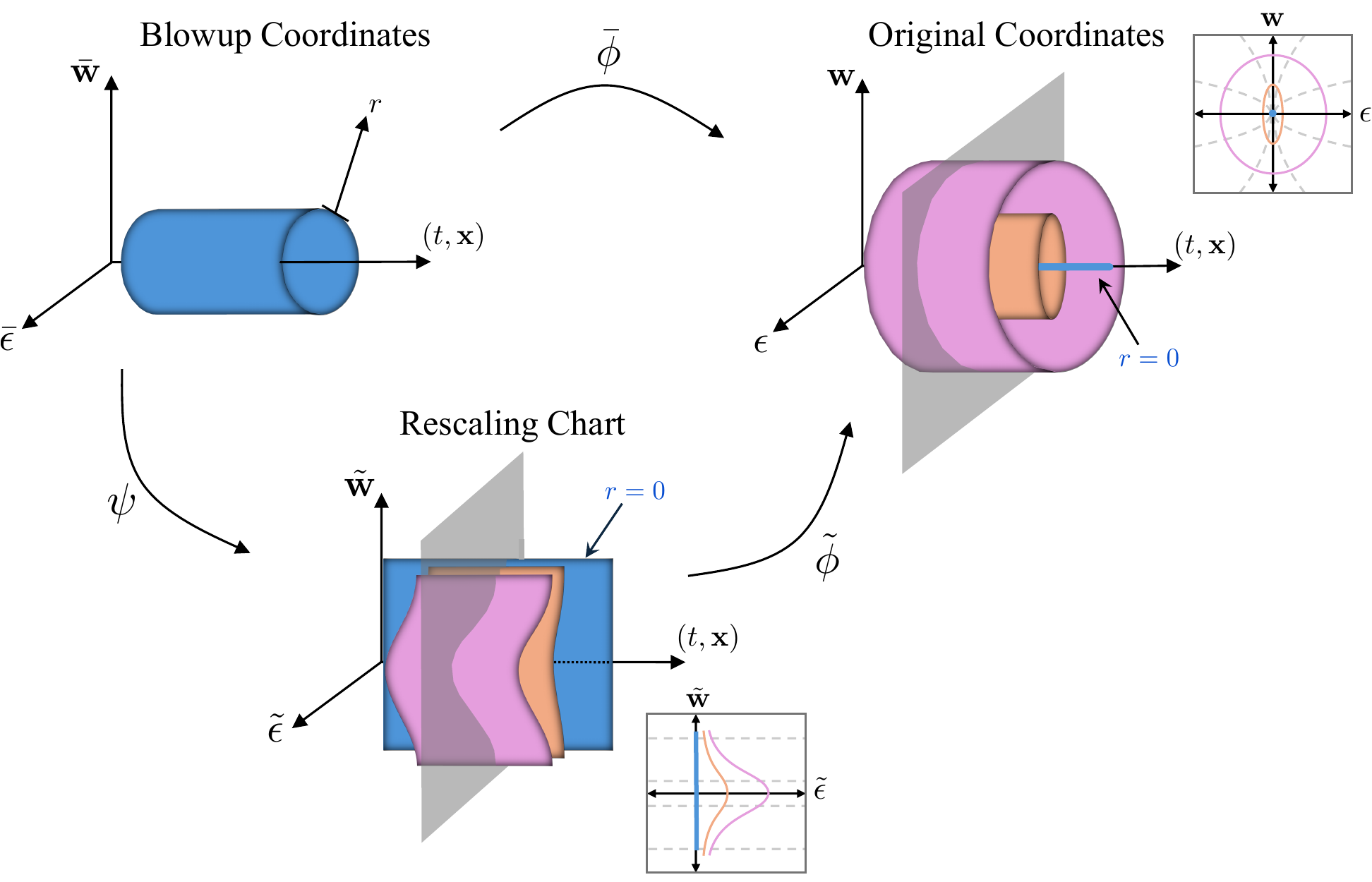}
\caption{
Geometry of the weighted blowup and the rescaling chart. 
The upper left diagram shows the blown up coordinates $(\bar{\mathbf w},\bar\epsilon,r)$, with $(t,\mathbf{x})$ passing through unchanged. 
The upper right diagram shows the image of these coordinates under the blow-down map $\bar\phi$ in the original variables $(\mathbf w,\epsilon)$. 
The lower middle diagram shows the image of the blown up coordinates under the rescaling chart $\psi$, with chart coordinates $(\tilde{\mathbf w},\tilde\epsilon)$. 
In both insets, colored curves are curves of constant $r$ and use the same colors as the constant $r$ shells in the corresponding three dimensional diagram. 
The dashed gray lines in the insets are curves of constant direction on the blown up sphere. 
For the exponents $a=1$ and $b=2$, the blow-down map $\bar\phi$ sends constant $r$ sections to ellipsoidal shells in the $(\mathbf w,\epsilon)$ variables and sends curves of constant direction to parabolic curves. 
The rescaling chart unwraps the $\bar\epsilon>0$ part of the blown up space into a Cartesian coordinate plane. 
In these coordinates, curves of constant direction become parallel dashed lines, while curves of constant $r$ extend across all values of the chart variable $\vcw$.
The chart blow-down map $\tilde\phi$ recovers the original variables by the mapping $\mathbf w=\tilde\epsilon\tilde{\mathbf w}$ and $\epsilon=\tilde\epsilon^2$. }
\label{fig_blowup}
\end{figure}

As we show in Section~\ref{sec:3.3}, if the exponents $(a,b)$ in \eqref{e09} are chosen judiciously, every component of the blowup vector field $\bar{\vc F}$ will contain a common maximal factor $r^k$ for some $k\in\mathbb N$. In other words, the $k$-jet of $\bar{\vc F}$ vanishes at $r=0$.
We remove this factor by defining the desingularized vector field
\begin{equation}
\bar {\vc G}(t,\vc x,\bar{\mathbf w},\bar\epsilon,r)=\frac{1}{r^k}\bar{\vc F}(t,\vc x,\bar{\mathbf w},\bar\epsilon,r),
\label{desing_vf}
\end{equation}
which is equivalent (but not necessary conjugate) to the vector field $\bar{\vc F}$ for $r>0$: the two vector fields have the same orbits up to a rescaling of time \cite[Proposition~3.1 and Remark~3.2]{takens1974}.
The desingularized vector field $\bar {\vc G}$  reveals dynamics that are hidden in $\bar {\vc F}$ by the vanishing radial factor $r^k$ as $r\to 0$. Removing this factor targets the degeneracy associated with the nilpotent normal linearization in \eqref{eq:quad_degenerate_linearization}. 
Since the vector fields $\vc F$, $\bar{\vc F}$, and $\bar{\vc G}$ describe the same trajectories up to a change of coordinates and a rescaling of time, the GSPT results obtained for $\bar{\vc G}$ translate back to the original vector field $\vc F$ \cite{kuehn2015}. As we show in Section~\ref{sec:3.3}, the appropriate choice of the blowup exponents are $a=1$ and $b=2$, resulting in the desingularization exponent $k=1$. In order to arrive at this conclusion, we first need to introduce the so-called \emph{rescaling chart} on the physically relevant component of the blowup cylinder $\mathcal C_0$.

\subsection{Rescaling chart}\label{sec:rescaling}
Note that, in the blow-down map $\bar\phi$, the variables $(\bar{\vc w},\bar\eps)$ are not independent coordinates because they lie on $\mathbb S^3$. To compute the pullback vector field $\bar{\vc F}$ explicitly, it is therefore more convenient to pass to a local chart on the sphere of directions $\mathbb S^3$. Since the physically relevant range for the small parameter is $\eps=\mu^{-1}>0$, we use the patch $\bar\eps>0$.

On the patch $\bar\eps>0$, we can rewrite the variable $\vc w=r^a\bar{\vc w}$ as $\vc w = (r\bar\eps^{1/b})^a (\bar{\vc w}/\bar\eps^{a/b})$.
Using the fact that $\eps=r^b\bar\eps$ is constant along trajectories, equation \eqref{e9b} can be written equivalently as
\begin{equation}
\begin{aligned}
\vc w'=(r\bar\eps^{1/b})^{a}\left(\frac{\bar{\vc w}}{\bar\eps^{a/b}}\right)' &=
- (r\bar\eps^{1/b})^{2a}
\left\|\frac{\bar{\vc w}}{\bar\eps^{a/b}}\right\|
\frac{\bar{\vc w}}{\bar\eps^{a/b}}
-(r\bar\eps^{1/b})^b
\dr\left(\mdiff{\vc u}+\vc e_\g\right) \\
&\qquad
-(r\bar\eps^{1/b})^{a+b}
\left(\frac{\bar{\vc w}}{\bar\eps^{a/b}}\cdot\grad\right)\vc u .
\end{aligned}
\end{equation}
The repeated appearance of the terms $r\bar\eps^{\,1/b}$ and $\bar{\vc w}/\bar\eps^{\,a/b}$ motivates us to define the local coordinates,
\begin{equation}
\epst := r\bar\eps^{1/b},
\qquad
\vcw:=\frac{\bar{\vc w}}{\bar\eps^{a/b}}.
\end{equation}
These coordinates define the directional chart on the patch $\bar\eps>0$, which is also referred to as the \emph{rescaling chart} in the blowup literature \cite{jardon2019,kuehn2015}; see the lower middle panel in Figure~\ref{fig_blowup}.

More precisely, let
\begin{equation}
\mathbb{S}^3_+=\{(\bar{\mathbf w},\bar\epsilon)\in \mathbb{S}^3:\bar\epsilon>0\},
\end{equation}
and define the rescaling chart by
\begin{equation}
\begin{aligned}
\psi:\ & \Omega_t\times\Omega_x\times\mathbb{S}^3_+\times \R_0^+
\rightarrow\Omega_t\times\Omega_x\times\R^3\times\R_0^+\\
&(t,\vc x,\bar{\mathbf w},\bar\epsilon,r)\mapsto(t,\vc x,\vcw,\epst):=\left(t,\vc x,\frac{\bar{\vc w}}{\bar\eps^{a/b}},r\bar\epsilon^{1/b}\right).
\end{aligned}
\end{equation}
The blow-down map from the rescaling chart to the original coordinate system is given by
\begin{equation}
\begin{aligned}
\tilde\phi:\ & \Omega_t\times\Omega_x\times\R^3\times\R_0^+
\rightarrow \Omega_t\times\Omega_x\times\R^3\times\R_0^+\\
&(t,\vc x,\vcw,\epst)\mapsto(t,\vc x,\vc w,\eps):=(t,\vc x,\epst^a\vcw,\epst^b).
\end{aligned}
\label{dirblowdown}
\end{equation}

As depicted in Figure~\ref{fig_blowup}, the maps $\bar \phi$, $\psi$, and $\tilde\phi$ form a commutative diagram, 
\begin{equation*}
\begin{tikzcd}	
	\begin{aligned}
		\Omega_t\times\Omega_x\times \mathbb S^3_+ \times \mathbb R_0^+ \\
			(t,\vc x, \bar{\vc w},\bar\epsilon,r)\qquad \arrow{rr}{\bar \phi}
		\end{aligned}
 \arrow[swap]{dr}{\psi}& & 
 \begin{aligned}
 \Omega_t\times\Omega_x\times \mathbb R^3 \times \mathbb R_0^+ \\
 (t,\vc x,\vc w,\epsilon)  \qquad
 \end{aligned}
 \\
	& 
	\begin{aligned}
	\Omega_t\times\Omega_x\times\R^3\times\R_0^+\\
	(t,\vc x,\tilde{\vc w},\tilde\epsilon)\arrow{ur}[swap]{\tilde \phi} \qquad
	\end{aligned}
	& 
\end{tikzcd} 
\end{equation*}
so that the map $\tilde\phi$ is the expression of the original blow-down map $\bar \phi$ in the rescaling chart and satisfies
\begin{equation}
\tilde\phi\circ\psi=\bar\phi\big|_{\bar\epsilon>0}.
\end{equation}

The chart coordinates $(\vcw,\epst)$ are related to the blowup and original variables by
\begin{equation}
\vcw=\frac{\bar{\vc w}}{\bar\epsilon^{a/b}}
=
\frac{\vc w}{\epsilon^{a/b}},\qquad\epst=r\bar\epsilon^{1/b}=\epsilon^{1/b}.
\end{equation}
Thus the rescaling chart blow-down takes the simple form $\vc w=\epst^a\vcw$ and $\epsilon=\epst^b$.
The lower middle diagram in Figure~\ref{fig_blowup} shows the image of the blown up coordinates under the rescaling chart $\psi$. The corresponding inset shows how curves of constant $r$ and constant weighted direction appear in this chart.

The pullback of $\vc F$ by the chart blow-down map $\tilde\phi$ is
\begin{equation}
\tilde {\vc F}(t,\vc x,\vcw,\epst) =
[\mathrm D\tilde\phi(t,\vc x,\vcw,\epst)]^{-1}
\vc F(\tilde\phi(t,\vc x,\vcw,\epst)).
\label{dirblowupF}
\end{equation}
We refer to $\tilde {\vc F}$ as the directional blowup of $\vc F$ in the rescaling chart. In components, this gives
\begin{subequations}
\begin{align}
t'&=\epst^b,\\
{\vc x}' &= \epst^b(\vc u+\epst^a\vcw),\\
\epst^a\vcw' &=-\epst^{2a}\|\vcw\|\vcw-\epst^{b}\dr\left(\mdiff{\vc u}+\vc e_\g\right)-\epst^{a+b}(\vcw\cdot\grad)\vc u,\label{weqn}\\
b\epst^{b-1}\epst'&=0.
\end{align}
\end{subequations}

The $\vcw$ equation \eqref{weqn} displays the powers of $\epst$ that determine the appropriate choice of exponents $(a,b)$ in order to achieve a desingularized vector field, as explained in Section \ref{sec:3.3}. 

\subsection{Desingularization}\label{sec:3.3}
We choose the exponents $a,b\in\mathbb N$ by comparing the powers of $\epst$ in \eqref{weqn}. The goal is to identify the lowest-order terms in $\epst$, since these are the terms that survive at $\epst=0$ after desingularization. Retaining more than one lowest-order term is important here because it allows the critical manifold at $\epst=0$ to occur at nonzero values of the rescaled fast variable $\vcw$, rather than collapsing to $\vcw=\vc 0$, where the original normal linearization vanished. 

The three exponents on the right-hand side of \eqref{weqn} are $2a$, $b$, and $a+b$. Since $a,b>0$, the power $a+b$ is larger than $b$ and therefore cannot be minimal. The only possible leading-order balance is therefore between the terms containing $\epst^{2a}$ and $\epst^b$. We balance these terms by choosing the smallest positive integer exponents satisfying $2a=b$, which yields 
 $a=1$ and $b=2$.
With these exponents, the rescaling chart vector field $\tilde {\vc F}$ from \eqref{dirblowupF} becomes
\begin{subequations}
\begin{align}
t'&=\epst^2,\\
{\vc x}' &= \epst^2(\vc u+\epst\vcw),\\
\vcw' &=-\epst\|\vcw\|\vcw-\epst\dr\left(\mdiff{\vc u}+\vc e_\g\right)-\epst^2(\vcw\cdot\grad)\vc u,\\
\epst'&=0.
\end{align}
\label{wtildeqns}
\end{subequations}

Note that every term has a common factor of $\tilde\eps$. For $\epst>0$, we define the desingularized rescaling chart vector field,
\begin{equation}
\tilde{\vc G}(t,\vc x,\vcw,\epst)=\frac{1}{\epst}\tilde{\vc F}(t,\vc x,\vcw,\epst).
\end{equation}
Recall from Section~\ref{sec:3.1} that his vector field can be smoothly extended to $\tilde\eps=0$. More precisely, 
$\tilde{\vc G}$ is the rescaling chart representation of the desingularized vector field $\bar{\vc G}$ defined in \eqref{desing_vf} with $k=1$. Equivalently, the division by $\epst$ corresponds to a time reparametrization. In particular, define the rescaled fast time,
\begin{equation}
\sigma=\epst\tau=\frac{t-t_0}{\epst}.
\end{equation}
Then \eqref{wtildeqns} can be written equivalently as
\begin{subequations}\label{e10}
\begin{align}
\diff{t}{\sigma}&=\epst,\\
\diff{\vc x}{\sigma} &= \epst(\vc u+\epst\vcw),\\
\diff{\vcw}{\sigma} &=-\|\vcw\|\vcw-\dr\left(\mdiff{\vc u}+\vc e_\g\right)-\epst(\vcw\cdot\grad)\vc u,\label{e10d}\\
\diff{\epst}{\sigma}&=0,
\end{align}
\end{subequations}
whose right-hand side is the desingularized vector field $\tilde{\vc G}$.

For every fixed $\epst>0$, system \eqref{e10} is equivalent to system \eqref{e9} under the transformations $\vc w=\epst\vcw$, $\eps=\epst^2$, and $\sigma=\epst\tau$. We now apply GSPT to \eqref{e10}. Taking $\epst\to0$ in \eqref{e10} gives
\begin{subequations}
\begin{align}
\diff{t}{\sigma}&=0,\\
\diff{\vc x}{\sigma}&= \vc 0,\\
\diff{\vcw}{\sigma} &=-\|\vcw\|\vcw-\dr\left(\mdiff{\vc u}+\vc e_\g\right).
\label{e11b}
\end{align}
\label{e11}
\end{subequations}
As in equation~\eqref{e6}, the variables $(t,\vc x)$ are invariant in the layer problem \eqref{e11}. However, its fixed points no longer correspond to $\tilde{\vc w}=\vc 0$. Assuming that,
\begin{equation}
\dr\left(\mdiff{\vc u}+\vc e_\g\right)\neq \vc 0,
\end{equation}
the fixed points of \eqref{e11b} are given by
\begin{equation}
\vcw = \vcw_1(t,\vc x)
:=
-\frac{\dr\left(\mdiff{\vc u}+\vc e_\g\right)
}{\left\|
\dr\left(\mdiff{\vc u}+\vc e_\g\right)
\right\|^{1/2}}.
\label{e11c}
\end{equation}
The corresponding critical manifold in the rescaling chart comprises all such fixed points,
\begin{equation}
\tilde{\mathcal{M}}_0=\left\{(t,\vc x,\vcw):\quad\vcw=-\frac{\dr\left(\mdiff{\vc u}+\vc e_\g\right)}{\left\|\dr\left(\mdiff{\vc u}+\vc e_\g\right)\right\|^{1/2}},
\quad
(t,\vc x)\in \Omega
\right\}.
\label{e12}
\end{equation}

To determine whether $\tilde{\mathcal{M}}_0$ is normally hyperbolic, we examine the linearization of the fast equation \eqref{e11b} with respect to $\vcw$,
\begin{equation}
\mathrm D_{\vcw}\left[-\|\vcw\|\vcw-\dr\left(\mdiff{\vc u}+\vc e_\g\right)\right]
=
-\|\vcw\|\vc I-\frac{\vcw\vcw^T}{\|\vcw\|}.
\label{rescaling_linearization}
\end{equation}
On the critical manifold $\tilde{\mathcal{M}}_0$, where $\tilde{\vc w}= \tilde{\vc w}_1$, the Jacobian \eqref{rescaling_linearization} has an eigenvalue $-2\|\vcw_1\|$ in the direction of $\vcw_1$. It also has an eigenvalue $-\|\vcw_1\|$ with geometric multiplicity of two, corresponding to the two-dimensional eigenspace orthogonal to $\vcw_1$. Therefore, the critical manifold $\tilde{\mathcal{M}}_0$ is normally hyperbolic and attracting. Contrast this with the critical manifold \eqref{eq:crit_man} in the original coordinate system which was not normally hyperbolic.

\subsection{Slow manifold for quadratic drag}\label{sec:main_thm}
The blowup construction, derived in Sections~\ref{sec:3.1}-\ref{sec:3.3}, transformed the original slow-fast system into an equivalent dynamical system whose critical manifold is normally hyperbolic. As such, Fenichel's GSPT applies in the rescaling chart and allows us to express the inertial particle velocity $\vc v(t) = \dot{\vc x}(t)$ as a graph over the spatiotemporal variables $(t,\vc x)$. We summarize these results in the following theorem.

\begin{theorem}[Main Result]\label{thm2}
Assume that $\vc u\in C^{\ell+1}(\Omega;\mathbb R^3)$ and $\dr\left(\mdiff{\vc u}+\vc e_\g\right)\neq \vc 0$ for all $(t,\vc x)\in \Omega$. Let $\tilde{\mathcal{M}}_0$ denote the critical manifold defined in \eqref{e12}. Writing $\epst=\eps^{1/2}$, there exists $\epst_0>0$ such that, for all $\epst\in[0,\epst_0)$, the following are true. 
\begin{enumerate}
	\item There exists a locally invariant, $C^\ell$-smooth manifold $\tilde{\mathcal{M}}_{\epst}$, diffeomorphic to $\tilde{\mathcal{M}}_0$, which is a graph over the spatiotemporal coordinates $(t,\vc x)$. Furthermore, $\tilde{\mathcal{M}}_{\epst}$ is $\mathcal{O}(\epst)$ close to $\tilde{\mathcal{M}}_0$ in Hausdorff distance, and is normally hyperbolic and attracting.

\item The equations of motion on the slow manifold $\tilde{\mathcal{M}}_\epst$ are given by
\begin{equation}
\dot{\vc x}
=
\vc u(t,\vc x)
+\epst\vcw_1(t,\vc x)
+\epst^2\vcw_2(t,\vc x)
+\ldots
+\epst^{\ell}\vcw_{\ell}(t,\vc x)
+\mathcal{O}(\epst^{\ell+1}).
\label{slowman_expansion}
\end{equation}
The functions $\vcw_k(t,\vc x)$ are given by the following recursive formulas:
\begin{subequations}
\begin{align}
\vcw_1
&=
-\frac{\dr(\mdiff{\vc u}+\vc e_\g)}
{\|\dr(\mdiff{\vc u}+\vc e_\g)\|^{1/2}}, \label{LOT}\\
\vcw_k
&=
-\frac{1}{\|\vcw_1\|}
\biggr[
\vc I
-
\frac{\vcw_1\vcw_1^T}{2\|\vcw_1\|^2}
\biggr]
\biggr[
\mdiff{\vcw_{k-1}}
+
(\vcw_{k-1}\cdot\nabla)\vc u\label{HOT}\\
&\qquad
+
\sum\limits_{j=1}^{k-2}
\left(
\frac{\vcw_{k-j}^\top\vcw_{j+1}-r_{k-j}r_{j+1}}
{2\|\vcw_1\|}
\vcw_1
+
r_{k-j}\vcw_{j+1}
+
(\vcw_j\cdot\nabla)\vcw_{k-j-1}
\right)
\biggr],\nonumber
\end{align}
\end{subequations}
where $2\le k\le \ell$ and the scalar coefficients $r_k$ are defined by
\begin{equation}
r_k
=
\frac{\vcw_k^\top\vcw_1}{\|\vcw_1\|}
+
\frac{1}{2\|\vcw_1\|}
\sum\limits_{j=1}^{k-2}
\left(
\vcw_{k-j}^\top\vcw_{j+1}
-
r_{k-j}r_{j+1}
\right),
\qquad 2\le k\le \ell.
\end{equation}
\end{enumerate}
\end{theorem}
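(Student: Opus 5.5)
Part 1 follows by applying Theorem~\ref{thm1} to the desingularized system \eqref{e10} rather than to \eqref{e9}. In \eqref{e10}, $\epst$ plays the role of the singular parameter, $(t,\vc x)$ are the slow variables (their $\sigma$-derivatives are $\mathcal O(\epst)$), and $\vcw$ is the fast variable.

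\textbf{Step 1: regularity of the critical manifold.} Since $\dr(\mdiff{\vc u}+\vc e_\g)\neq\vc 0$ on the compact set $\Omega$, its norm is bounded below by a positive constant. Hence $\vcw_1$ in \eqref{e11c} is $C^\ell$ in $(t,\vc x)$: $\vc u\in C^{\ell+1}$ makes $\mdiff{\vc u}$ of class $C^\ell$, and $\|\cdot\|^{1/2}$ is smooth away from zero. So $\tilde{\mathcal M}_0$ is a compact $C^\ell$ graph over $\Omega$.

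\textbf{Step 2: normal hyperbolicity and attraction.} The computation \eqref{rescaling_linearization} shows that the eigenvalues of the linearization are $-2\|\vcw_1\|$ and $-\|\vcw_1\|$, which are uniformly negative on $\Omega$.

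\textbf{Step 3: persistence.} The right-hand side of \eqref{e10} is $C^\ell$ in a neighbourhood of $\tilde{\mathcal M}_0$, because $\vcw$ stays away from $\vc 0$ there and $\|\vcw\|\vcw$ is smooth away from the origin. Fenichel's theorem (Theorem~\ref{thm1}) therefore gives a locally invariant, $C^\ell$, attracting, normally hyperbolic $\tilde{\mathcal M}_\epst$. It is $\mathcal O(\epst)$-close to $\tilde{\mathcal M}_0$ and is a graph $\vcw=\vc h(t,\vc x,\epst)$. Since $\epst$ is itself a state variable with $\epst'=0$, one can equivalently view the family as a single center-like manifold smooth in $\epst$ as well. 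Mapping back via $\vc w=\epst\vcw$ and $\dot{\vc x}=\vc u+\vc w$ gives $\dot{\vc x}=\vc u+\epst\vc h$.

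\textbf{Step 4: the expansion in part 2.} Write $\vc h=\sum_{k\ge1}\epst^{k-1}\vcw_k+\mathcal O(\epst^{\ell})$; this is justified by the $C^\ell$ dependence of $\vc h$ on $\epst$ and Taylor's theorem. Impose the invariance equation. In physical time, $\vc w=\epst\vc h$ must satisfy $\eps\dot{\vc w}=-\|\vc w\|\vc w-\eps\dr(\mdiff{\vc u}+\vc e_\g)-\eps(\vc w\cdot\grad)\vc u$, with $\dot{\vc w}=\epst(\partial_t\vc h+((\vc u+\epst\vc h)\cdot\nabla)\vc h)$. Dividing by $\epst^2$ gives
\[
\epst\bigl(\mdiff{\vc h}+\epst(\vc h\cdot\nabla)\vc h\bigr)=-\|\vc h\|\vc h-\dr\Bigl(\mdiff{\vc u}+\vc e_\g\Bigr)-\epst(\vc h\cdot\nabla)\vc u .
\]

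\textbf{Main obstacle: the non-polynomial term $\|\vc h\|\vc h$.} To expand it, set $\rho=\|\vc h\|=\sum\epst^{k-1}r_k$ with $r_1=\|\vcw_1\|$. Matching $\rho^2=\vc h^\top\vc h$ order by order gives
\[
2r_1r_k+\sum_{j}r_{k-j}r_{j+1}=2\vcw_1^\top\vcw_k+\sum_{j}\vcw_{k-j}^\top\vcw_{j+1}.
\]
This determines $r_k$ as stated, with the $\vcw_k$-dependence isolated in the term $\vcw_1^\top\vcw_k/r_1$. At order $\epst^{k-1}$, the coefficient of $\|\vc h\|\vc h$ is $r_1\vcw_k+r_k\vcw_1+\sum_{j=1}^{k-2}r_{k-j}\vcw_{j+1}$. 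Substituting $r_k$ shows that the $\vcw_k$-dependence is through $(r_1\vc I+\vcw_1\vcw_1^\top/r_1)\vcw_k$, the negative of the Jacobian \eqref{rescaling_linearization}. The remaining pieces are the $\vcw_1$-directed quadratic sums. Collecting the order-$\epst^{k-1}$ terms from the left side ($\mdiff{\vcw_{k-1}}$ and $(\vcw_j\cdot\nabla)\vcw_{k-j-1}$) and from $(\vcw_{k-1}\cdot\nabla)\vc u$ yields a linear equation for $\vcw_k$. This is solvable by normal hyperbolicity, with explicit inverse
\[
\Bigl(r_1\vc I+\frac{\vcw_1\vcw_1^\top}{r_1}\Bigr)^{-1}=\frac1{r_1}\Bigl(\vc I-\frac{\vcw_1\vcw_1^\top}{2r_1^2}\Bigr)
\]
(Sherman--Morrison). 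This reproduces \eqref{HOT}. The order-$\epst^0$ equation gives \eqref{LOT}. Each $\vcw_k$ involves $k-1$ derivatives of $\mdiff{\vc u}$, so the regularity $C^{\ell+1}$ suffices up to $k=\ell$. The $\mathcal O(\epst^{\ell+1})$ remainder in \eqref{slowman_expansion} follows from Taylor's theorem for the $C^\ell$ graph, after multiplying by $\epst$. The bookkeeping in this step, in particular keeping the sums consistent with the stated index ranges, is where I expect the most care to be needed.
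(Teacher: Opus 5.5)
Your proposal is correct and follows essentially the same route as the paper. Part 1 is Fenichel's theorem applied to the desingularized system \eqref{e10}, and part 2 is a Taylor expansion of the graph in $\epst$, an order-by-order expansion of $\|\vcw_\epst\|$ through the $r_k$, and inversion of $\|\vcw_1\|(\vc I+\vcw_1\vcw_1^\top/\|\vcw_1\|^2)$; the only cosmetic difference is that you derive the invariance equation in physical time and divide by $\epst^2$, whereas the paper writes it directly in the rescaled time $\sigma$, giving the identical equation.
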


\begin{proof}
See Appendix~\ref{app:A}.
\end{proof}

To preserve continuity with the previous section, we state Theorem~\ref{thm2} in the rescaling chart $(\vcw,\epst)$. But note that the slow manifold $\tilde{\mathcal M}_\epst$ in the rescaling chart, implies a corresponding slow manifold $\mathcal M_\eps=\tilde\phi(\tilde{\mathcal M}_\epst)$ (with the same properties) in the original coordinates $(\vc w,\eps)$, owing to the smoothness of the blow-down map $\tilde\phi$.

\begin{remark}
A few remarks regarding the assumption $\dr\left(\mdiff{\vc u}+\vc e_\g\right)\neq \vc 0$ are in order.
\begin{enumerate}
	\item At first glance, it may appear that this assumption is required for $\tilde{\vc w}_1(t,\vc x)$ to be well-defined. But in fact, $\tilde{\vc w}_1$ vanishes as $\dr\left(\mdiff{\vc u}+\vc e_\g\right)$ tends to zero, and therefore is well-defined. Instead, this assumption is required to ensure the normal hyperbolicity of the critical manifold $\tilde{\mathcal M}_0$. As discussed in Section~\ref{sec:3.3}, when $\tilde{\vc w}_1$ vanishes, the eigenvalues of the Jacobian~\eqref{rescaling_linearization} become zero and the critical manifold is no longer normally hyperbolic. As such, the question of its persistence becomes inconclusive using Fenichel's GSPT. Therefore, assumption $\dr\left(\mdiff{\vc u}+\vc e_\g\right)\neq \vc 0$ is required to ensure normal hyperbolicity of the critical manifold.
	
	\item In practice, the effective acceleration $\mdiff{\vc u}+\vc e_\g$ is unlikely to vanish; recall that $\vc e_\g$ is the constant unit vector pointing in the opposite direction of gravity. However, the special case of neutrally buoyant particles, which correspond to $\dr=0$, violates the assumptions of Theorem~\ref{thm2} and therefore requires special treatment. 
	We leave this special case to future work.
\end{enumerate}
\end{remark}

\begin{remark}
We also comment on the regularity of the slow manifold $\tilde{\mathcal M}_\epst$. Note that the vector field on the right-hand side of \eqref{e10} is only $C^1$ smooth at $\vcw=\vc 0$. However, the slow manifold $\tilde{\mathcal M}_\epst$ is a small perturbation of the critical manifold \eqref{e12} on which we have $\vcw\neq\vc 0$. Therefore, in the neighborhood of the critical manifold $\tilde{\mathcal M}_0$, the vector field \eqref{e10} is $C^\infty$ in $\vcw$ and $C^\ell$ in $(t,\vc x)$ because $\vc u\in C^{\ell+1}$. The slow manifold $\tilde{\mathcal M}_\epst$ inherits its smoothness from this vector field and therefore is also $C^\ell$ smooth.
\end{remark}

Theorem~\ref{thm2}, and especially equation~\eqref{slowman_expansion}, give the reduced inertial particle dynamics with quadratic drag. Since $\epst=\epsilon^{1/2}$ in the rescaling chart, the reduced dynamics contain half integer powers of the original small parameter $\eps=\mu^{-1}$. This contrasts with the linear drag case, where the slow manifold admits an expansion in integer powers of $\epsilon$. Furthermore, the functions $\tilde{\vc w}_k$ in the expansion~\eqref{slowman_expansion} are significantly different in the case of linear drag; cf. \cite{IP_haller08}.
Thus, quadratic drag changes both the geometric mechanism underpinning the slow manifold as well as the asymptotic structure of the reduced equations of motion.

\section{Numerical results}\label{sec:numerics}
In this section, we demonstrate the validity of our slow manifold reduction using two examples. In Section~\ref{sec:term_qui}, we consider a quiescent flow for which the exact solution to the inertial particle equation~\eqref{e2} is known. We show that our slow manifold reduction correctly predicts the terminal velocity of the particles in this case. 

We then turn, in Section~\ref{sec:TDM}, to a velocity field that models turbulent dispersion. In Section~\ref{sec:traj_conv}, we use this flow to numerically demonstrate the accuracy of the slow manifold reduction and the convergence rate towards the slow manifold. 
Finally, in Section~\ref{sec:source_inv}, we demonstrate a practical application of our reduced-order equations.
In particular, we use the turbulent flow and the corresponding slow manifold reduction for source inversion by tracing particles backward in time from their observed final positions. We show that this source inversion is not possible using the full-order model~\eqref{e2} because of its instability in backward time.

In the following, the full-order model (FOM) refers to \eqref{e2} with quadratic drag ($\alpha=2$). The reduced-order model (ROM) is obtained by replacing the inertial particle velocity with its slow manifold approximation~\eqref{slowman_expansion} from Theorem~\ref{thm2}. More specifically, for a ROM of order $m$, we have
\begin{equation}
\dot{\vc x}^{(m)}(t)=\vc u(t,\vc x^{(m)}(t))+\sum_{k=1}^{m}\eps^{k/2}\tilde{\vc w}_k(t,\vc x^{(m)}(t)),\qquad m=0,1,2,
\label{ROM}
\end{equation}
where the sum is omitted for $m=0$. Thus the zeroth-order ROM corresponds to fluid trajectories $\vc x^{(0)}(t)$, whereas higher-order ROMs include successive inertial corrections.

\subsection{Terminal velocity in quiescent flow}
\label{sec:term_qui}

We first consider \eqref{e2} with quadratic drag in a quiescent flow, $\vc u\equiv \vc 0$, and $\dr\ne0$. We consider the case where 
$\vc w(0)=\vc 0$, i.e., the relative velocity of the inertial particle vanishes at the initial time $t_0=0$. In this case, the exact solution to the FOM is known.

Under these assumptions, the FOM \eqref{e2} reduces to
\begin{equation}
\dot{\vc w}=-\frac{1}{\eps}\|\vc w\|\vc w-\dr\vc e_\g,\quad \vc w(0)=\vc 0,
\label{quiescent_vector}
\end{equation}
where $\eps=1/\mu$. The exact solution of \eqref{quiescent_vector} is
\begin{equation}
\vc w(t)=-\eps^{1/2}\frac{\dr}{|\dr|^{1/2}}\tanh\!\left[\left(\frac{|\dr|}{\eps}\right)^{1/2}t\right]\vc e_\g.
\end{equation}
As a result, we find the terminal velocity,
\begin{equation}
\lim_{t\to\infty}\vc w(t)=-\eps^{1/2}\frac{\dr}{|\dr|^{1/2}}\vc e_\g=:\vc w_\infty.
\end{equation}

We now compare this exact solution with the slow manifold reduction from Theorem~\ref{thm2}. The slow manifold has the expansion,
$\vc w_\eps=\eps^{1/2}\tilde{\vc w}_1+\eps\tilde{\vc w}_2+\eps^{3/2}\tilde{\vc w}_3+\cdots$.
For a quiescent flow, we have
\begin{equation}
\tilde{\vc w}_1=-\frac{\dr}{|\dr|^{1/2}}\vc e_\g,\qquad\tilde{\vc w}_k=\vc 0,\qquad k\geq 2.
\end{equation}
Therefore, the slow manifold reduction $\vc w_\eps$ coincides with the exact terminal velocity,
\begin{equation}
\vc w_\eps=-\eps^{1/2}\frac{\dr}{|\dr|^{1/2}}\vc e_\g=\vc w_\infty.
\end{equation}
Note that, in the quiescent flow, the reduced velocity $\vc w_\eps$ happens to be independent of space and time $(t,\vc x)$.

Finally, we examine the rate of convergence to the slow manifold by observing that,
\begin{equation}
\|\vc w(t)-\vc w_\eps\|= \left(\eps |\dr|\right)^{\frac12} \frac{2}{1+\exp\left[2|\dr/\eps|^{\frac12} t \right]}.
\end{equation}
Therefore, the exact solution $\vc w(t)$ of the FOM converges exponentially fast towards the slow manifold $\vc w_\eps$, as expected.

\subsection{Turbulent Dispersion}\label{sec:TDM}
In this section, we assess the slow manifold reduction of inertial particles suspended in a three-dimensional, time-dependent fluid flow. The fluid velocity field is based on the turbulent dispersion model (TDM) introduced by Lacorata, Mazzino, and Rizza \cite{lacorata2008}. The TDM was originally developed as a subgrid model for turbulent dispersion in large eddy simulations. Here, we use it instead as an analytic velocity field to examine the validity of our slow manifold reduction.

The TDM flow defines a three-dimensional incompressible velocity field from two stream functions, $\Psi_{\mathrm I}(x_2,x_3,t)$ and $\Psi_{\mathrm{II}}(x_1,x_3,t)$, which are spatially periodic and temporally quasi-periodic. The velocity field is then defined as $\vc u = -\nabla \times \pmb\Psi$, where $\pmb\Psi = [\Psi_{\mathrm I},\Psi_{\mathrm{II}},0]^\top$. 
In dimensionless variables, the components of the velocity field are given by
\begin{equation}
\begin{aligned}
u_1(t,\vc x)&=U_0+A_1\sin[k_1x_1-\gamma_1\sin(\omega_1 t)]\cos[k_3x_3-\gamma_3\sin(\omega_3t)],\\
u_2(t,\vc x)&=-A_2\sin[k_2x_2-\gamma_2\sin(\omega_2 t)] \cos[k_3x_3-\gamma_3\sin(\omega_3t)],\\
u_3(t,\vc x)&=-A_1\frac{k_1}{k_3}\cos[k_1x_1-\gamma_1\sin(\omega_1 t)]\sin[k_3x_3-\gamma_3\sin(\omega_3t)]\\
&\quad +A_2\frac{k_2}{k_3} \cos[k_2x_2-\gamma_2\sin(\omega_2 t)]\sin[k_3x_3-\gamma_3\sin(\omega_3t)],
\end{aligned}
\end{equation}
where $\vc x = (x_1,x_2,x_3)$.
Here, we slightly modify the TDM velocity field by introducing a mean flow $U_0$ in the $x_1$ direction. In the following, we set $U_0=1$.
Since the field is spatially periodic, and does not impose boundary conditions, it should be interpreted as a model for a turbulent flow away from any walls, rather than a boundary layer model. Following the parameter choices in \cite{lacorata2008}, we set
\begin{equation}
(\omega_1,\omega_2,\omega_3)
=
\left(2\pi,2\pi\sqrt{2},\frac{2\pi^2\sqrt{2}}{3}\right),
\quad
\gamma_1=\gamma_2=\gamma_3=\frac{\pi}{2},
\quad (k_1,k_2,k_3)=(2\pi,2\pi,4\pi).
\end{equation}
For the amplitudes, we choose $A_1=1/4$ and $A_2=1/8$.

\subsubsection{Comparing full and reduced trajectories in TDM}\label{sec:traj_conv}
We first compare FOM trajectories with zeroth-, first-, and second-order ROM trajectories in the TDM flow over the time interval $t\in[0,80]$ using $\eps^{1/2}=5\times 10^{-3}$. Figures~\ref{fig:dsf_trajectory_comparison}(a,c) show the trajectories for a bubble with $\dr=-0.5$, and Figures~\ref{fig:dsf_trajectory_comparison}(b,d) show the trajectories for an aerosol with $\dr=0.5$. All trajectories are initialized from a randomly chosen initial condition at $\vc x(0)\approx(-0.3245,0.5906,7.5604)$. For the FOM, the particle velocity is set initially equal to the fluid velocity, $\vc v(0)=\vc u(0,\vc x(0))$. 
\begin{figure}[!t]
	\centering
	\includegraphics[width=\textwidth]{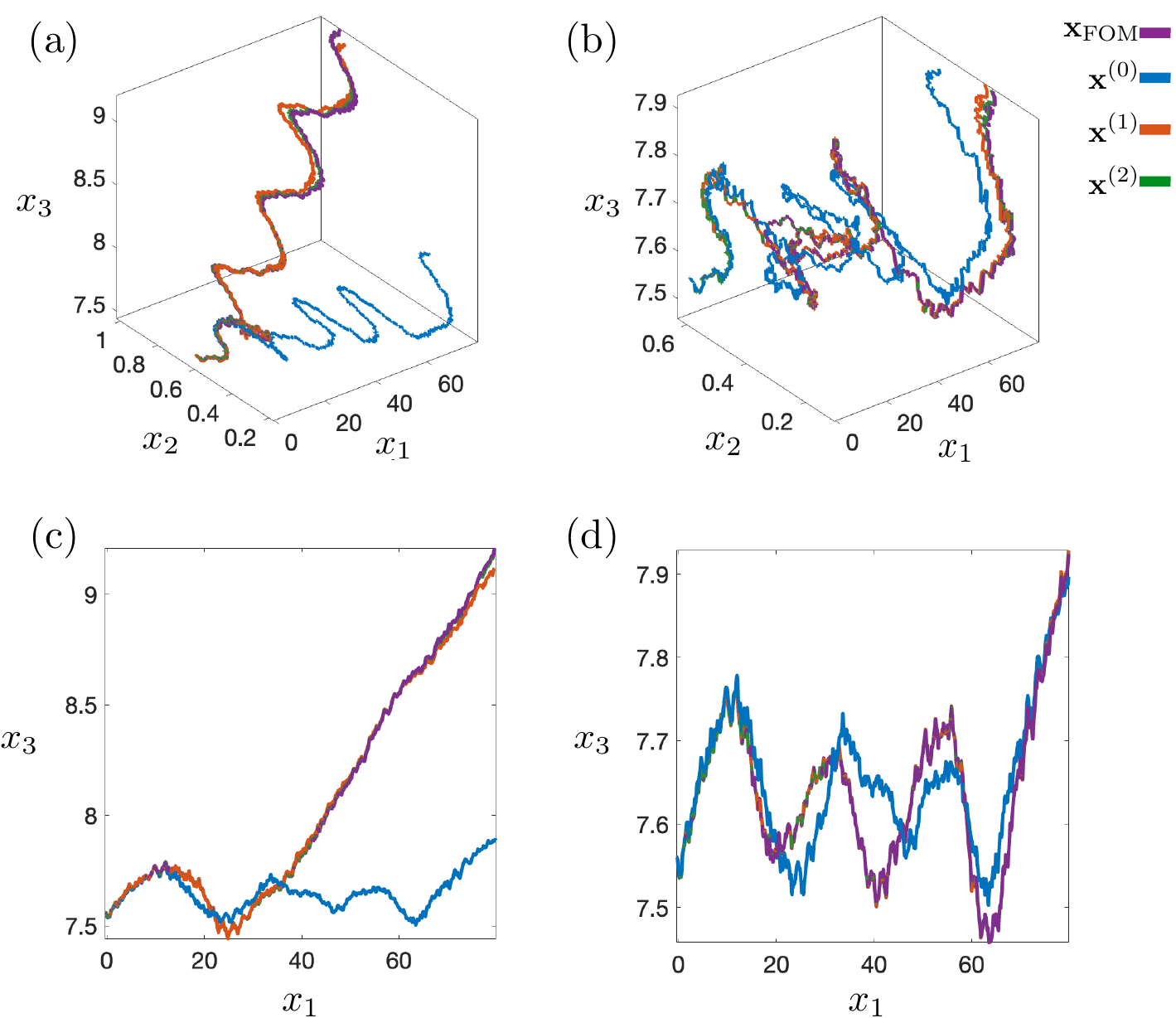}
	\caption{FOM and ROM trajectories in the TDM flow with $\eps^{1/2}=5\times 10^{-3}$ over the time interval $t\in[0,80]$. (a) Three dimensional trajectories for a bubble with $\dr=-0.5$. (b) Three dimensional trajectories for an aerosol with $\dr=0.5$. (c) Projection of the bubble trajectories onto the $(x_1,x_3)$ plane. (d) Projection of the aerosol trajectories onto the $(x_1,x_3)$ plane. For both bubbles and aerosols, the FOM initial velocity is set equal to the fluid velocity. The zeroth order ROM follows the fluid flow, while the first- and second-order ROMs include successive slow manifold corrections and more closely track the FOM trajectory.}
	\label{fig:dsf_trajectory_comparison}
\end{figure}

Recall that the zeroth-order ROM ($m=0$) coincides with the fluid trajectory and therefore deviates substantially from the FOM trajectory. On the other hand, the first- and second-order ROMs, which include successive inertial corrections, track the FOM trajectory more closely.
In fact, the trajectories corresponding to $m=1,2$ are so close to the truth $\vc x_{\mathrm{FOM}}$ that their differences are not readily visible in Figure~\ref{fig:dsf_trajectory_comparison}. In order to see the differences more clearly, Figure~\ref{fig:dsf_trajectory_error} shows the relative error, $\|\vc x^{(m)}(t)-\vc x_{\mathrm{FOM}}(t)\|/\|\vc x_{\mathrm{FOM}}(t)\|$, between the ROM trajectories and the truth.
The errors decrease consistently as the order of approximation $m$ increases.
\begin{figure}[!t]
\centering
\includegraphics[width=\textwidth]{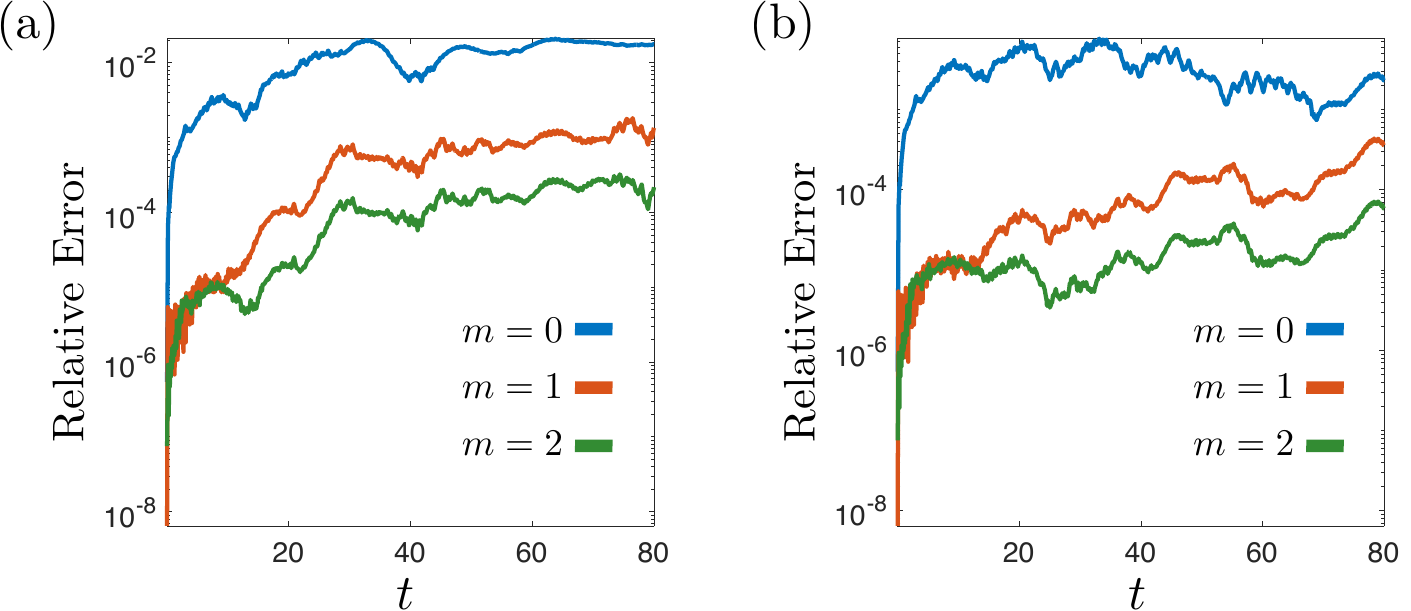}
\caption{The relative error, $\|\vc x^{(m)}(t)-\vc x_{\mathrm{FOM}}(t)\|/\|\vc x_{\mathrm{FOM}}(t)\|$, for the ROM trajectories shown in Figure~\ref{fig:dsf_trajectory_comparison}. (a) Error for a bubble with $\dr=-0.5$. (b) Error for an aerosol with $\dr=0.5$.}
\label{fig:dsf_trajectory_error}
\end{figure}

The above results compare the accuracy of the ROM for a single value of the small parameter $\eps$. Next, we assess the accuracy more systematically across different values of this parameter. The slow manifold expansion in Theorem~\ref{thm2} predicts that the ROM with order $m$ approximates the FOM velocity on the slow manifold with error,
\begin{equation}
\dot{\vc x}_{\mathrm{FOM}}-\dot{\vc x}^{(m)}
=
\mathcal{O}\!\left(\eps^{(m+1)/2}\right).
\end{equation}
Thus, for sufficiently small $\eps$, increasing the ROM order $m$ should improve the accuracy of the approximation to the FOM with the rate $\eps^{(m+1)/2}$. We test this theoretical prediction in the TDM flow with $\dr=-0.5$; the results corresponding to $\dr=0.5$ are similar and therefore are omitted here for brevity. For each $\eps>0$, we initialize the FOM at the same position $\vc x(0)$ as the ROMs and set its initial velocity $\vc v(0)$ equal to the corresponding ROM velocity. We then compare the FOM velocity with the ROM velocity over the time interval $t\in[0, 10]$. Figure~\ref{fig:convergence_study} shows the median over time of the velocity error, $\|\dot{\vc x}_{\mathrm{FOM}}-\dot{\vc x}^{(m)}\|$, and confirms the expected convergence rates, $\mathcal O(\eps^{(m+1)/2})$, for $m=0,1,2$.
\begin{figure}[!t]
\centering
\includegraphics[width=.4\textwidth]{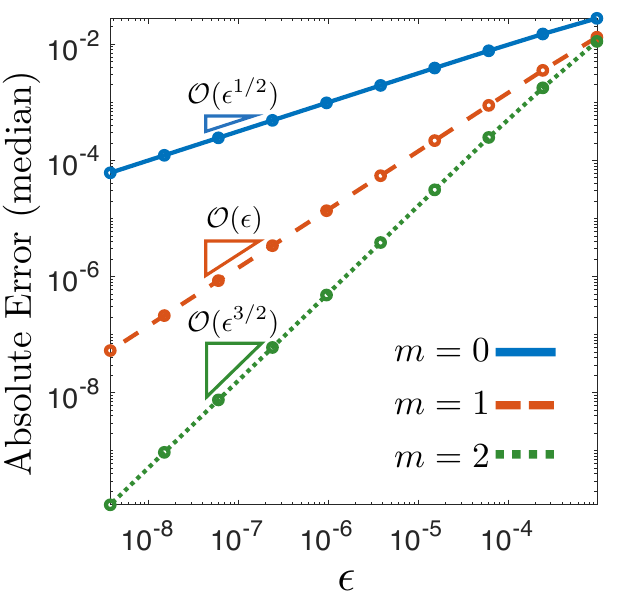}
\caption{Convergence of ROM velocities to the FOM velocity as $\eps\to0$ in the TDM flow with $\dr=-0.5$. For each ROM order $m=0,1,2$, the vertical axis shows the median of the absolute error, $\|\dot{\vc x}_{\mathrm{FOM}}(t)-\dot{\vc x}^{(m)}(t)\|$, over $t\in [0,10]$. The observed slopes agree with the slow manifold prediction: the errors scale like $\eps^{1/2}$, $\eps$, and $\eps^{3/2}$ for $m=0,1,2$, respectively.}
\label{fig:convergence_study} 
\end{figure}

Together, these numerical results demonstrate that the slow manifold reduction captures the inertial particle dynamics in a chaotic, three-dimensional, and time-dependent flow. The trajectory comparisons show quantitative agreement between the FOM and the slow manifold reduction, with higher-order corrections successively decreasing the relative error. The convergence study confirms the scaling of the error with $\eps$ as predicted by Theorem~\ref{thm2}.

\subsubsection{Source inversion using the reduced dynamics}
\label{sec:source_inv}
Source inversion concerns the inference of an inertial particle's release point from particle position observed at a later time. A primary motivation is source detection of environmental pollutants, with examples including aerosols in the atmosphere \cite{tang2009}, marine debris \cite{BeronVera2020}, olfactory tracking \cite{Reddy2022}, and urban contaminant plume source identification \cite{chow2008}. 

In the present setting, source inversion requires integrating particle trajectories backward in time from their observed final positions. Backward integration of the FOM~\eqref{e2} would require the particle relative velocity $\vc w(T)$ at the final time $T$, in addition to its position $\vc x(T)$. Although the final position can be easily observed, measuring the particle velocity is more challenging. Even if the final particle velocity is known, the FOM is unstable in backward time: in backward time, the FOM trajectories are repelled from the slow manifold. As a result, small errors in the observed relative velocity, which decay in forward time, are amplified in backward time, causing the FOM velocity to grow rapidly. On the other hand, since the ROM~\eqref{slowman_expansion} evolves on the slow manifold, it avoids this fast backward instability. In addition, the backward integration of this ROM only requires the final positions, and not the final velocity.

\begin{figure}[t!]
	\centering
	\includegraphics[width=\textwidth]{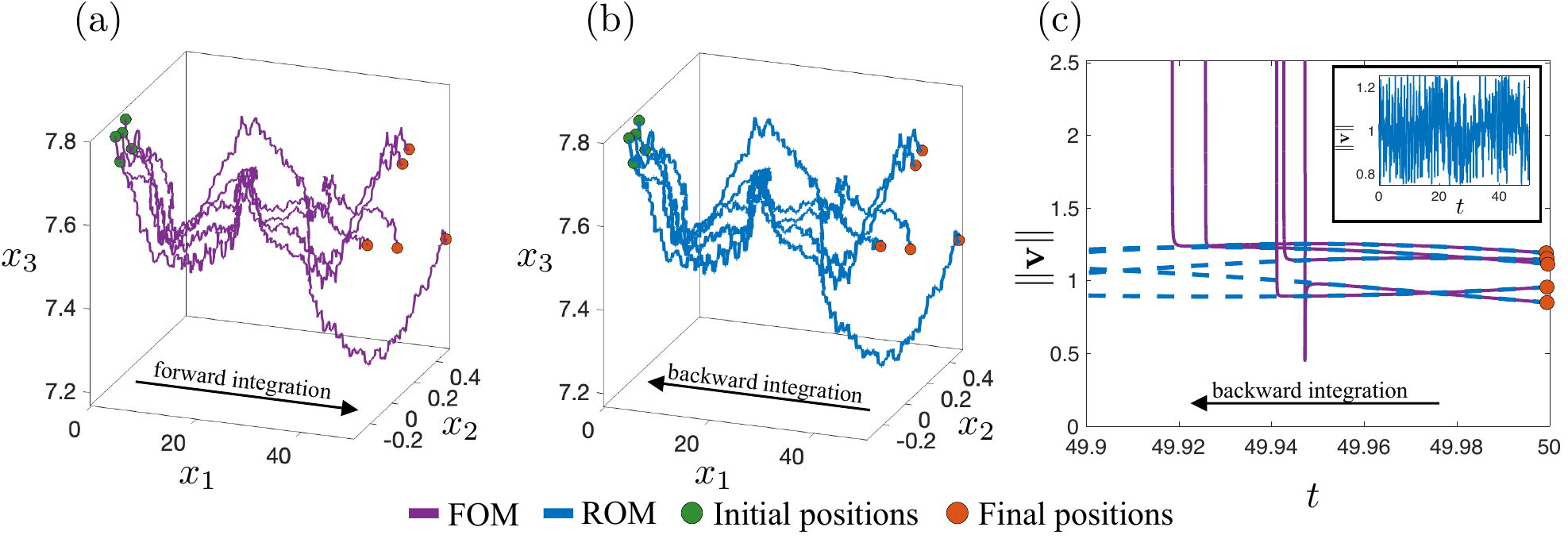}
	\caption{Source inversion in the TDM flow with $\dr=-0.5$ and $\eps^{1/2}=5\times 10^{-3}$ using the first-order ROM ($m=1$). Green dots mark the initial release locations at $t_0=0$, and orange dots mark the final observed locations at $T=50$. Purple curves correspond to the FOM, and blue curves correspond to the ROM, with dashed blue curves used in panel~(c) for contrast. (a) FOM trajectories integrated forward from the initial release locations to the final observed locations. (b) Backward time integration from the final observed locations; the ROM trajectories are visible, while the FOM trajectories cannot be visually resolved because of the rapid backward-time instability of the FOM. (c) Velocity magnitude during backward time integration near the final observation time. The FOM velocity rapidly grows in backward time, while the ROM velocity remains bounded over the full time interval as confirmed by the inset.}
	\label{fig:source_inv}
\end{figure}

We demonstrate this by considering the first-order ROM~\eqref{ROM}, corresponding to $m=1$, for source inversion in the TDM flow for a bubble with $\dr=-0.5$ and $\eps^{1/2}=5\times 10^{-3}$. We generate target data by initializing several FOM trajectories off the slow manifold at the initial time $t_0=0$ and integrating them forward in time to the final observation time $T=50$, as shown in Figure~\ref{fig:source_inv}(a). To infer the initial particle locations, the FOM is integrated backward from the final full state $(\vc x(T),\vc v(T))$, while the first-order ROM is integrated backward from the observed final positions $\vc x(T)$ using the reduced velocity field \eqref{ROM} with $m=1$. Figure~\ref{fig:source_inv}(b) shows the backward-time integration of the ROM, which accurately recovers the initial particle positions.

The backward-time trajectories of the FOM are omitted from Figure~\ref{fig:source_inv}(b) because they quickly diverge due to the aforementioned instability. This is demonstrated in Figure~\ref{fig:source_inv}(c) which shows this instability near the final observation time.
Within $0.1$ time units, the FOM backward trajectories rapidly develop large velocities, while the ROM trajectories remain bounded and trace backward toward the original release locations. The inset of Figure~\ref{fig:source_inv}(c) shows the particle velocity, according to the ROM, for one of the inertial particles over the entire backward integration time interval $[0,T]$. 

This example demonstrates the applicability of our slow manifold reduction to source inversion. The ROM remains bounded in backward time and accurately recovers the release locations, whereas the FOM rapidly develops instabilities in backward time.

\section{Conclusions}\label{sec:conclusion}
We derived a slow manifold reduction for inertial particle dynamics at high Reynolds numbers, where the drag force is quadratic in the relative velocity of the particle. For linear drag, the critical manifold is normally hyperbolic in the original variables, so that the geometric singular perturbation theory applies directly. In contrast, for quadratic drag, the normal hyperbolicity is lost, and GSPT is not immediately applicable. In order to resolve this issue, we applied a weighted cylindrical blowup, passed to the rescaling chart, and desingularized the transformed vector field. In the transformed system, the relevant critical manifold is normally hyperbolic and attracting, consequently proving that the critical manifold persists as an attracting invariant manifold, i.e., the slow manifold. The resulting reduced equation has a different asymptotic structure from its linear-drag counterpart. For example, in the quadratic drag case, the slow manifold expansion contains half-integer powers of the small parameter $\eps=1/\mu$, where $\mu$ is the drag coefficient.

We presented two examples demonstrating the validity of our slow manifold reduction. In the quiescent flow, the exact terminal velocity agrees with our slow manifold prediction, and the exact solution converges exponentially fast in time to this terminal velocity. In the time-dependent turbulent dispersion example, the first- and second-order reduced-order equations provide successively improved approximations of the full-order model as the small parameter $\eps$ decreases, with the numerical convergence rates matching the theory. 

Our work opens the door to application of reduced-order equations to inertial particle dynamics at high Reynolds numbers where the quadratic drag is the appropriate model, e.g., firebrand transport in wildfires~\cite{Koo2012FirebrandTransport,Mendez2022}. We showcased one such application here by considering the problem of source inversion: inferring initial positions of particles from their observed distribution at a later time. Although this problem is not tractable using the full-order model because of its backward-time instability, our reduced equations correctly identified the source of contamination.  Other applications, which should be pursued in the future, include the inference of particle clustering patterns from Eulerian quantities. This has been already accomplished in the linear drag case~\cite{IP_haller08,sapsis2010clustering}, but remains an open problem for quadratic drag.

The special case of neutrally buoyant particles, corresponding to the mass density ratio $\dr=0$, violates the assumptions of Theorem~\ref{thm2}. Future work should also address this special case. Our results indicate that neutrally buoyant particles may require their own specialized blowup construction.

\subsection*{Acknowledgments}
This work was partially supported through NSF awards DMS-2208541 and DMS-2220548.

\appendix
\section{Proof of Theorem~\ref{thm2}}\label{app:A}
The arguments presented in Sections \ref{sec:3.1}--\ref{sec:3.3} showed that the critical manifold $\tilde{\mathcal{M}}_0$ is normally hyperbolic and attracting. Therefore, Theorem~\ref{thm1} implies that, for sufficiently small $\epst$, this manifold persists as a locally invariant manifold $\tilde{\mathcal{M}}_\epst$ with the stated properties in the first part of Theorem~\ref{thm2}.

We now turn to the second part of the theorem and derive the expansion~\eqref{slowman_expansion}. Our arguments rely on the governing equation~\eqref{e10} in the rescaling chart, which we reproduce here for convenience: 

\begin{subequations}\label{app_desing_system}
	\begin{align}
		\diff{t}{\sigma}&=\epst,\label{app_t}\\
		\diff{\vc x}{\sigma} &= \epst(\vc u+\epst\vcw),\label{app_x}\\
		\diff{\vcw}{\sigma}&=-\|\vcw\|\vcw-\dr\left(\mdiff{\vc u}+\vc e_\g\right)-\epst(\vcw\cdot\grad)\vc u,\label{app_w}\\
		\diff{\epst}{\sigma}&=0\label{app_eps}.
	\end{align}
\end{subequations}

The slow manifold $\tilde{\mathcal{M}}_\epst$ is a graph over the variables $(t,\vc x)$ and therefore the relative velocity $\vcw$, restricted to this manifold, is given by a function $\vcw_\epst(t,\vc x)$. We consider the Taylor series expansion of this graph in the small parameter $\epst$,
\begin{equation}\label{wt_exp}
	\vcw_\epst(t,\vc x)
	=
	\vcw_1(t,\vc x)
	+
	\epst\vcw_2(t,\vc x)
	+
	\ldots
	+
	\epst^{\ell-1}\vcw_\ell(t,\vc x)
	+
	\mathcal{O}(\epst^\ell).
\end{equation}
Note that this expansion is legitimate since the right-hand side of~\eqref{app_desing_system} is smooth in $\epst$.

On the slow manifold $\tilde{\mathcal{M}}_\epst$, we have
\begin{equation}
	\dot{\vc x}\big|_{\tilde{\mathcal{M}}_\epst}
	=
	\frac{1}{\epst}
	\diff{\vc x}{\sigma}
	\biggr|_{\tilde{\mathcal{M}}_\epst}
	=
	\vc u(t,\vc x)+\epst\vcw_\epst(t,\vc x),
	\label{reduced_position_equation}
\end{equation}
where the first identity follows from the definition of the rescaled fast time~\eqref{app_t} and the second identity follows from~\eqref{app_x}.
Substituting the expansion~\eqref{wt_exp} in \eqref{reduced_position_equation} gives \eqref{slowman_expansion}.

It remains to derive the functional forms of $\vcw_k(t,\vc x)$ in~\eqref{wt_exp}. Substituting $\vcw_\epst$ in \eqref{app_w}, we obtain,
\begin{equation}\label{w_slowMan}
	\diff{}{\sigma}\vcw_\epst(t,\vc x(t))
	=
	-\|\vcw_\epst\|\vcw_\epst
	-\dr\left(\mdiff{\vc u}+\vc e_\g\right)
	-\epst(\vcw_\epst\cdot\grad)\vc u.
\end{equation}
In the remainder of this proof, we substitute the expansion \eqref{wt_exp} in the left- and right-hand side of \eqref{w_slowMan}, and equate similar powers of $\epst$ on both sides. 

\emph{Left-hand side of Eq. \eqref{w_slowMan}}:
Using the chain rule, we first observe that
\begin{equation}
\begin{aligned}
	\diff{\vcw_\epst}{\sigma}& =\partial_t\vcw_\epst\diff{t}{\sigma}+\left(\diff{\vc x}{\sigma}\cdot\nabla\right)\vcw_\epst\\
	& = \epst \left[ \partial_t \vcw_\epst + (\vc u\cdot \nabla)\vcw_\epst+\epst (\vcw_\epst\cdot \nabla)\vcw_\epst
	\right],
\end{aligned}
\end{equation}
where, for the last identity, we used equations \eqref{app_t} and \eqref{app_x}.
Upon substituting \eqref{wt_exp}, and recalling that $\mdiff{\ } = \partial_t+\vc u\cdot\nabla$, we obtain
\begin{equation}
	\diff{\vcw_\epst}{\sigma}=\sum_{n=1}^{\ell-1}\epst^n\left[\mdiff{\vcw_n}+\sum_{j=1}^{n-1}(\vcw_j\cdot\nabla)\vcw_{n-j}\right]+\mathcal{O}(\epst^\ell).
	\label{app_lhs_expansion}
\end{equation}

\emph{Right-hand side of Eq. \eqref{w_slowMan}}: First we write the norm as
\begin{equation}
	\|\vcw_\epst\|=\|\vcw_1\|+\epst r_2+\cdots+\epst^{\ell-1}r_\ell+\mathcal{O}(\epst^\ell),
	\label{app_norm_expansion}
\end{equation}
where
\begin{equation}
	r_k
	=
	\frac{\vcw_k^\top\vcw_1}{\|\vcw_1\|}
	+
	\frac{1}{2\|\vcw_1\|}
	\sum_{j=1}^{k-2}
	\left(
	\vcw_{k-j}^\top\vcw_{j+1}
	-
	r_{k-j}r_{j+1}
	\right), \quad 2\le k\le \ell.
	\label{app_rk}
\end{equation}
Therefore, the quadratic term can be expanded as
\begin{equation}
	\|\vcw_\epst\|\vcw_\epst=
	\|\vcw_1\|\vcw_1 
	+\sum_{n=1}^{\ell-1}\epst^n\left[\|\vcw_1\|\left(\vc I+\frac{\vcw_1\vcw_1^\top}{\|\vcw_1\|^2}\right)\vcw_{n+1}+\vc c_{n+1}\right]+\mathcal{O}(\epst^\ell),
\label{app_quad_expansion}
\end{equation}
where
\begin{equation}
\vc c_k
=
\sum_{j=1}^{k-2}
\left[
\frac{
	\vcw_{k-j}^\top\vcw_{j+1}
	-
	r_{k-j}r_{j+1}
}
{2\|\vcw_1\|}
\vcw_1
+
r_{k-j}\vcw_{j+1}
\right],\quad 2\le k\le \ell.
\label{app_ck}
\end{equation}

Finally, the right-hand side of equation~\eqref{w_slowMan} becomes
\begin{equation}
	\begin{aligned}
		-\|\vcw_1\|\vcw_1 & -\dr\left(\mdiff{\vc u}+\vc e_\g\right)\\
		&-\sum_{n=1}^{\ell-1}\epst^n\left[\|\vcw_1\|\left(\vc I+\frac{\vcw_1\vcw_1^\top}{\|\vcw_1\|^2}\right)\vcw_{n+1}+\vc c_{n+1} + \vcw_n\cdot \nabla \vc u\right]+\mathcal{O}(\epst^\ell).
	\end{aligned}
	\label{w_slowMan_rhs}
\end{equation}

\emph{Equating two sides of Eq. \eqref{w_slowMan}}: Equating equations \eqref{app_lhs_expansion} and \eqref{w_slowMan_rhs} implies, 
\begin{equation}
	\begin{aligned}
		&\|\vcw_1\|\vcw_1 +\dr\left(\mdiff{\vc u}+\vc e_\g\right) \\
		&+\sum_{n=1}^{\ell-1}\epst^n\Big[\|\vcw_1\|\left(\vc I+\frac{\vcw_1\vcw_1^\top}{\|\vcw_1\|^2}\right)\vcw_{n+1}+\vc c_{n+1} + \vcw_n\cdot \nabla \vc u +\mdiff{\vcw_n}+\sum_{j=1}^{n-1}(\vcw_j\cdot\nabla)\vcw_{n-j}\Big]\\
		&+\mathcal{O}(\epst^\ell)=\vc 0.
	\end{aligned}
	\label{w_slowMan_eq}
\end{equation}
For this identity to hold for all $\epst\in(0,\epst_0)$, each term multiplying a different power of $\epst$ must vanish. For $\epst^0$ term, we therefore obtain, 
\begin{equation}
\|\vcw_1\|\vcw_1+\dr\left(\mdiff{\vc u}+\vc e_\g\right)=\vc 0,
\end{equation} 
or equivalently,
\begin{equation}
	\vcw_1=-\frac{\dr\left(\mdiff{\vc u}+\vc e_\g\right)}{\left\|\dr\left(\mdiff{\vc u}+\vc e_\g\right)\right\|^{1/2}}.
	\label{app_w1}
\end{equation}
This coincides with Eq.~\eqref{LOT} in Theorem~\ref{thm2}.

For the terms multiplying $\epst^n$, with $n\geq 1$, we obtain
\begin{equation}
	\begin{aligned}
		\|\vcw_1\|\left(\vc I+\frac{\vcw_1\vcw_1^\top}{\|\vcw_1\|^2}\right)\vcw_{n+1}= 
		-\mdiff{\vcw_n}-(\vcw_n\cdot\nabla)\vc u-\sum_{j=1}^{n-1}(\vcw_j\cdot\nabla)\vcw_{n-j}-\vc c_{n+1}.
	\end{aligned}
	\label{app_match_n}
\end{equation}
Since $\vcw_1\ne \vc 0$, the matrix multiplying $\vcw_{n+1}$ is invertible, and
\begin{equation}
	\left[\|\vcw_1\|\left(\vc I+\frac{\vcw_1\vcw_1^\top}{\|\vcw_1\|^2}\right)\right]^{-1}=\frac{1}{\|\vcw_1\|}\left[\vc I-\frac{\vcw_1\vcw_1^\top}{2\|\vcw_1\|^2}\right].
	\label{app_inverse}
\end{equation}
Using this inverse, and setting $k=n+1$, we finally obtain
\begin{equation}
	\begin{aligned}
		\vcw_k
		&=-\frac{1}{\|\vcw_1\|}\left[\vc I-\frac{\vcw_1\vcw_1^\top}{2\|\vcw_1\|^2}\right]\biggr[\mdiff{\vcw_{k-1}}+(\vcw_{k-1}\cdot\nabla)\vc u\\
		&\qquad\qquad+\sum_{j=1}^{k-2}\biggr(\frac{\vcw_{k-j}^\top\vcw_{j+1}-r_{k-j}r_{j+1}}{2\|\vcw_1\|}\vcw_1+r_{k-j}\vcw_{j+1}+(\vcw_j\cdot\nabla)\vcw_{k-j-1}\biggr)\biggr],
	\end{aligned}
	\label{app_wk}
\end{equation}
for $2\le k\le \ell$. This coincides with the recursive formula \eqref{HOT} and completes the proof of Theorem~\ref{thm2}.


 \end{document}